\numberwithin{equation}{section}
\newcommand{\E}{\mathbb{E}}
\newcommand{\N}{\mathbb{N}}
\newcommand{\Pb}{\mathbb{P}}
\newcommand{\R}{\mathbb{R}}
\newcommand{\vertiii}[1]{{\left\vert\kern-0.25ex\left\vert\kern-0.25ex\left\vert #1 
    \right\vert\kern-0.25ex\right\vert\kern-0.25ex\right\vert}}
\def\R{\mathbb{R}}
\def\NN{\mathbb{N}}
\def\al{\alpha}
\def\dh2l{\mathbf{d}_{\mathbb{H}_{2\ell}}}
\def\d2{\mathbf{d}_2}
\newcommand{\Indi}[1]{\mathbbm{1}_{#1}}
\newtheorem{thm}{Theorem}[section]
\newtheorem{lemma}[thm]{Lemma}
\newtheorem{theorem}{Theorem}[section]
\theoremstyle{remark}
\newtheorem{remark}[theorem]{Remark}
\theoremstyle{definition}
\newcounter{dummy} \numberwithin{dummy}{section}
\newtheorem{Proposition}[dummy]{Proposition}
\newtheorem{Theorem}[dummy]{Theorem}
\def\1{{\rm l}\hskip -0.21truecm 1}
\begin{document}

\title[Approximation of smooth numbers for harmonic samples]{Approximation of smooth numbers for harmonic samples: a Stein method approach}
\author{Arturo Jaramillo, Xiaochuan Yang}
\address{Arturo Jaramillo: Department of Probability and statistics, Centro de Investigaci\'on en matem\'aticas (CIMAT)}
\email{jagil@cimat.mx}
\address{Xiaochuan Yang: Department of Mathematics, Brunel University London}
\email{xiaochuan.yang@brunel.ac.uk}

\keywords{Probabilistic number theory, Dickman approximation, Stein method}
\date{\today}

\subjclass{60F05, 11K65}
 
\begin{abstract}
We present a de Bruijn-type approximation for quantifying the content of $m$-smooth numbers, derived from samples obtained through a probability measure over the set $\{1,\dots, n\},$ with point mass function at $k$ proportional to $1/k$. Our analysis is based on a stochastic representation of the measure of interest, utilizing weighted independent geometric random variables. This representation is analyzed through the lens of Stein's method for the Dickman distribution. A pivotal element of our arguments relies on precise estimations concerning the regularity properties of the solution to the Dickman-Stein equation for Heaviside functions, recently developed by Bhattacharjee and Schulte in \cite{BattSchulte}. Remarkably, our arguments remain mostly in the realm of probability theory, with Mertens’ first and third theorems standing as the only number theory estimations required.
\end{abstract}

\maketitle
\section{Introduction}\label{Sec:intro}
\noindent Let $\mathcal{P}$ denote the set of non-negative prime numbers and $[n]$ the integer interval $[n]:=\{1,\dots, n\}$. Define the function $\psi:\N\rightarrow\N$ through
$$\psi(k):=\max\{p\in\mathcal{P}\ ;\ p\text{ divides } k\}.$$
We say that a given natural number $k$ is $m$-smooth, with $m\in\N$, if $\psi(k)\le m$. Namely, if the prime factors of $k$ are at most $m$. The primary goal of this manuscript is to introduce a new probabilistic measurement $\Psi_H[m,n]$ for the content of the set of $m$-smooth numbers bounded by $n$, and find sharp approximations for it. Our principal contribution is an explicit estimation for $\Psi_H[m,n]$, expressed in terms of the Dickman function, in the spirit of the work developed by de Bruijn in \cite{MR0046375}.\\

\subsection{Overview on the asymptotics for the size of $m$-smooth numbers}
\noindent Let $\mathcal{Z}[n,m]$ denote the collection of $m$-smooth numbers bounded by $n$ and $\Psi(n,m)$ its cardinality 
\begin{align}\label{eq:Psidef}
\Psi(n,m):=|\mathcal{Z}[n,m]|. 
\end{align}
The set $\mathcal{Z}[n,m]$ is of fundamental importance in the field of number theory, finding its most notable application in the formulation of the Hildebrand condition for the resolution of the Riemann hypothesis. This condition, first presented in \cite{MR0804201} and succinctly outlined in Theorem \ref{thm:resolutionRiemann} below, involves a precise estimation for $\Psi(n,m)$, and serves as a sufficient criterion for validating the Riemann hypothesis.\\

\noindent Many pieces of work have been devoted to the study of the function $\Psi(n,m)$. Some of the most significant ones for purposes of our paper, have been documented in the compendium \cite{MR1265913} (see also \cite{MR2467549}), to which the reader is referred for a thorough examination of the topic. From a probabilistic number theory point of view, we can write 
\begin{align*}
\Psi(n,m)
  &=n\Pb[J_{n}\in\mathcal{Z}[n,m]]=n\Pb[\psi(J_{n})\leq m],
\end{align*}
where $J_{n}$ is a uniform random variable defined over a probability space $(\Omega,\mathcal{F},\mathbb{P})$. This perspective can be utilized as a pivot for proposing alternative ways for describing $\mathcal{Z}[m,n]$, based on modifications of the law of $J_{n}$ (see \eqref{eq:PsiHdef} below). These adjustments should be designed in such a way that the resulting problem becomes more tractable while retaining the capacity to recover information for $\Psi(n,m)$. Although we will mainly focus on providing a detailed resolution to the ``simplified problem'', we will briefly describe a methodology for transferring useful estimations to the study of $\Psi(m,n)$.\\

\noindent Our  modification on the law of $J_n$ is largely inspired by the recent paper \cite{ChJaYa}, which introduced the idea of using the harmonic distribution in $[n]$ as a tool for understanding divisibility properties of uniform samples. More precisely, we let $H_{n}$ be a collection of random variables defined in $(\Omega, \mathcal{F},\mathbb{P})$, with probability mass at $k$ proportional to $k^{-1}\Indi{[n]}(k)$. We then consider the function   
\begin{align}\label{eq:PsiHdef}
\Psi_{H}[n,m]
  &:=n\Pb[H_{n}\in\mathcal{Z}[n,m]]=n\Pb[\psi(H_{n})\leq m],
\end{align}
as measurement of content for $\mathcal{Z}[n,m]$. The advantage of dealing with the law of $H_n$ instead of $J_n$ resides in the fact that the prime multiplicities of $H_n$ are easily described in terms of independent geometric random variables, as highlighted in Proposition \ref{thm:condindeprep} below. This will allow us to simplify the task of estimating  $\Pb[\psi(H_{n})\leq m]$,  to approximating in sup-norm, the cumulative distribution of a random sum of the form
\begin{align*}
\frac{1}{\lambda_m}\sum_{p\in\mathcal{P}\cap[n]}\log(p)\xi_p,
\end{align*}
where the $\xi_p$ are independent geometric variables supported over $\N_0:=\N\cup\{0\}$, with success probability $1-1/p$. This reduction transforms the nature of the problem to a purely probabilistic one, where the recent advances in the theory of Stein's method for Dickman distributed random variables can be implemented (see Section \ref{sec:prelim} for further information regarding the Dickman distribution).\\

\noindent The aforementioned connection between the functions $\Psi_H$ and $\Psi$ stems from \cite[Lemma 3.1]{ChJaYa}, which asserts that the variable $J_{n}$ can be sharply approximated by a product of the form $Q_{n}H_n$, where $Q_{n}$, conditional on the value of $H_{n}$, is uniformly distributed in the set $(\mathcal{P}\cap[n/H_n])\cup\{1\}$. The precise statement establishes that for all $n\geq 21$,
\begin{align}\label{approxJnHnQn}
d_{TV}(J_{n}, H_nQ_n)
  &\leq 61\frac{\log\log(n)}{\log(n)},
\end{align}
where $d_{TV}$ denotes the distance in total variation between the probability distributions associated to the inputs. In particular, if it holds that for all strictly positive $\varepsilon$, the set $\hat{\mathcal{S}}_{\varepsilon}$,   defined through \eqref{def:Sepssharp}, is such that probability $\Pb[\psi(Q_{n}H_n)\leq m]$ can be sharply approximated, uniformly over $(n,m)\in\hat{\mathcal{S}}_{\varepsilon}$, then we would obtain the validity of the Hildebrand condition \cite{MR0804201} and consequently, the resolution of the Riemann hypothesis. This problem escapes the reach of our manuscript and is postponed for future research. \\

\subsection{Some questions for future research}
Although the problem of obtaining sharp estimates for $\Psi(n,m)$ will not be addressed in this paper, we would like to bring the reader's attention to some ideas that might be useful for the treatment of this particular problem. First we notice that a good approximation for $\Pb[\psi(H_{n})\leq m]$ is a fundamental first step for this task. Indeed, in virtue of \eqref{approxJnHnQn}, sharp estimations of $\Psi(n,m)$ can be obtained from their counterpart for $\Psi_H(n,m)$. In order to verify this, we write
\begin{align*}
\Pb[\psi(H_nQ_n)\leq m]
  &=\E[\Indi{\{\psi(H_n)\leq m\}}\Indi{\{Q_n\leq m\}}]
	  =\E[\Indi{\{\psi(H_n)\leq m\}}\E[\Indi{\{Q_n\leq m\}}\ |\ H_n]].
\end{align*}
Splitting the event $\{\psi(H_{n})\leq m\}$ into its components induced by the partition  
$$\{\{n/H_n\leq  m\}, \{n/H_n> m\}\},$$ 
and using the fact that for all $h\in\N$, it holds that
\begin{align*}
\Pb[Q_{n}\leq m\ |\ H_n=h]
  &=\left\{\begin{array}{ccc}
1  & \text{ if } & h>n/m\\
(\pi(m)+1)/(\pi(n/h)+1)	& \text{ if } & h\leq n/m,
\end{array}
\right.
\end{align*}
where $\pi:\R_{+}\rightarrow\R$ denotes the prime-counting function
\begin{align*}
\pi(x)
  &:=|\mathcal{P}\cap[1,x]|,
\end{align*}
we deduce the inequality  
\begin{align*}
\Pb[\psi(H_nQ_n)\leq m]
  &=\Pb[\psi(H_n)\leq m, n/m<H_n]+\E\left[\Indi{\{\psi(H_n)\leq m\}}
  \Indi{\{H_n\leq n/m\}}\frac{\pi(m)+1}{\pi(n/H_n)+1}\right],
\end{align*}
By a conditioning argument, the above identity yields
\begin{align*}
\Pb[\psi(H_nQ_n)\leq m]
  &=\Pb[\psi(H_n)\leq m]\Pb[ n/m<H_n\ |\ \psi(H_n)\leq m]\\
	&+\Pb[\psi(H_n)\leq m]\E\left[\frac{\pi(m)+1}{\pi(n/H_n)+1}\Indi{\{H_n\leq n/m\}}\ |\  \psi(H_n)\leq m\right].
\end{align*}
From the above discussion, we conclude that a sharp approximation for the distribution function of $ \psi(H_n)$ (or equivalently, for the function $\Psi_{H}$) can be used to transfer approximations of 
\begin{align}\label{eq:Conditionalexpectationgoal}
\E\left[\left(1+\frac{\pi(m)+1}{\pi(n/H_n)+1}\right)\Indi{\{H_n\leq n/m\}}\ |\  \psi(H_n)\leq m\right]
\end{align}
to approximations for the distribution of $\psi(H_nQ_n)$, and ultimately,  to $\Psi(n,m)$. Arguably, the most natural approach for carrying out an analysis for \eqref{eq:Conditionalexpectationgoal} would be to utilize the available estimations regarding the prime counting function. In particular, by \cite{MR3448979}, the inequality
\begin{align}\label{eq:Trudgian2}
|\pi(n)-\frac{n}{\log(n)}-\frac{n}{(\log n)^2}|
  &\leq \frac{184n}{(\log n)^3},
\end{align}
holds for $n\geq229$ (see also \cite[Section 3]{ChJaYa}), suggesting that an estimation of  \eqref{eq:Conditionalexpectationgoal} could be obtained through an analysis of 

\begin{align}\label{eq:quantityrequiredforRH}
\E\left[\left(1+\frac{m/\log(m)+m/\log(m)^2+1}{ n/(H_n\log(n/H_n))+n/(H_n\log(n/H_n)^2)+1}\right)\Indi{\{H_n\leq n/m\}}\ |\  \psi(H_n)\leq m\right].
\end{align}
The above term can be approximated, contingent on sharp estimates for
\begin{align*}
\E\left[g_m(n/H_n)\ |\  \psi(H_n)\leq m\right],
\end{align*}
for 
\begin{align*}
g_m(x)
  &:=\left(1+\frac{m/\log(m)+m/\log(m)^2+1}{x(\log(x)+\log(x)^2)+1}\right)\Indi{\{m\leq x\}}.
\end{align*}
Approximations on this object are closely related to the description of the limiting behavior of the measure $\mu_{n,m}$,  obtained as the distribution of $n/H_{n}$, conditional to  $\psi(H_n)\leq m$.  We conjecture that the measure $\mu_{n,m}$ concentrates around its mean and, up to a suitable normalization constant, admits an asymptotic limiting distribution. We intend to address this topic in a forthcoming paper.\\


\subsection{An overview on the description of $\Psi_{H}$}
Now we turn our attention to the function $\Psi_{H}$, which is the main focus of our investigation. Similarly to $\Psi$, the description of the asymptotic behavior of  $\Psi_{H}$ turns out to be closely related to the so-called Dickman function $\rho$, defined as the unique solution to the initial value problem of the delay type
\begin{align}\label{eq:rhodef}
u\rho^{\prime}(u)
  =-\rho(u-1),
	\quad\quad\quad\quad\quad\rho(u)=1\text{ for }u\in[0,1].
\end{align}
However, the structure of dependence on $m$ and $n$ through $\rho$ for the approximation of $\Psi_{H}(m,n)$, is different from its counterpart for $\Psi(m,n)$. In particular, our main result, presented in Theorem \ref{thm:mainone}, combined with Theorem \ref{SharpPsiHildebrand}, imply that the quotient $$(\Psi(m,n)/\Psi_{H}(m,n))/(\log(n)/\log(m))$$ 
converges to one as the quotient $\log(n)/\log(m)$ tends to infinity. In particular, $\Psi(m,n)$ is not asymptotically equivalent to $\Psi_{H}(m,n)$ as $\log(n)/\log(m)$ tends to infinity. A second remarkable feature of $\Psi_{H}$ is the fact that it can be sharply estimated uniformly over $m,n\in\N$, unlike $\Psi(m,n)$, whose approximation has shown to be tractable only for regimes of $m$ and $n$ for which $\log(n)/\log(m)\leq f(n)$, with $f(x)$ being an adequate function satisfying $f(x)\leq \sqrt{x}$.\\

\noindent The approach we follow for studying $\Psi_{H}$ is entirely probabilistic, and up to some important but tractable technical computations, consists of two fundamental steps: (i) First, we utilize a representation of $H_{n}$ in terms of independent geometric random variables conditioned on an adequate event. This reduces the analysis of $\Pb[H_{n}\in\mathcal{Z}[m,n]]$, to the estimation of the distribution $F_n$ of the sum of weighted geometric random variables satisfying that $F_{n}$ converges pointwise as $n$ tends to infinity to the Dickman function (ii) As a second step, we measure the quality of the aforementioned approximation in the sup-norm, which is done by interpreting $\rho$  as a constant multiple of the cumulative distribution function of a random variable following the so-called Dickman law and then applying the Stein-method theory for the Dickman distribution introduced in \cite{MR4003564}. For readability, the proof of our main result is presented conditional on the validity of a series of key steps, whose proof is postponed to later sections.\\

\noindent The rest of the paper is organized as follows: in Section  \ref{sec:resultsforPsi}, we review some of the work related to the estimations of $\Psi$ and their relevance in the field of number theory. In Section \ref{sec:mainrsults}, we present our main results and briefly explain the key ingredients involved in its resolution. In Section \ref{sec:prelim}, we present some preliminary tools to be utilized throughout the paper and set the appropriate notation. In Section \ref{sec:profofmainthm} we present the proof of the main theorem and finally, in sections \ref{sec:lem:simplificationtwo} and  \ref{sec:lem:kolmmain}, we prove some  technical lemmas utilized in the proof of our main result.

\section{Some known approximation results for $\Psi(n,m)$}\label{sec:resultsforPsi}
In this section, we revise some of the known results regarding the study of approximations for $\Psi(n,m)$. The material of this section is largely taken from \cite{MR1265913}.\\

\subsection{The function $\Psi$ and its relation to the Dickman function}
In the seminal paper \cite{Dickman}, Dickman presented a groundbreaking result regarding the behavior of the function $\Psi$ and its relation to the solution to the delay equation \eqref{eq:rhodef}. Dickman showed that for every $x\in\R_{+},$ the following convergence holds
\begin{align*}
\lim_{n\rightarrow\infty}\frac{1}{n}\Psi(n,n^{1/x})
  &=\rho(x).
\end{align*}
This relation was further refined by Ramaswami in \cite{MR0031958}, who proved that for any fixed $x\ge 1$, there exists $C_x>0$, such that 
\begin{align}\label{eq:convergencedickmanorig}
\left|\frac{1}{n}\Psi\left(n,n^{1/x}\right)-\rho(x)\right|
  &\leq \frac{C_x}{\log(n)}.
\end{align}
These two results where subsequently explored thoroughly by many authors, seeking for extensions that allow for uniformity over $x$. To pin down precisely the aforementioned notion of uniformity, we observe that if we set the smoothness threshold $n^{1/x}$ appearing in \eqref{eq:convergencedickmanorig}, to be equal to $m$, we have that $x=\Upsilon(n,m)$, where $\Upsilon:\N\rightarrow\R$ is given by 
\begin{align*}
\Upsilon(n,m)
  &:=\frac{\log(n)}{\log(m)},
\end{align*}
so that the Dickman approximation \eqref{eq:convergencedickmanorig}, reads $\Psi\left(n,m\right)/n\approx\rho\circ\Upsilon(n,m)$, provided that we take  $m=n^{1/x}$. Ramaswani's approximation, on the other hand,  becomes 
\begin{align*}
\left|\frac{1}{n}\Psi(n,m)-\rho\circ\Upsilon(n,m)\right|
  &\leq C_{\Upsilon(n,m)}/\log(n).
\end{align*}
One of the most significant generalizations to the above results  was done by de Bruijn in \cite{MR0046375}, who proved Theorem \ref{debruijn} below 

\begin{Theorem}[de Bruijn, 1951]\label{debruijn}
For a fixed value of $\varepsilon>0$,  define the set 
\begin{align*}
\mathcal{S}_{\varepsilon}
  &= \{(n,m)\in\N^2\ |\ n\geq 3\text{ and } \Upsilon(n,m)\leq \log(x)^{3/5-\varepsilon}\}
\end{align*}
Then, there exists a constant $C_{\varepsilon}$ only depending on $\varepsilon$, such that 
\begin{align}\label{PsiapproxBruijn}
\sup_{(m,n)\in \mathcal{S}_{\varepsilon}}\frac{1}{M_{m,n}}\left|\Psi(n,m)-n\rho\circ\Upsilon(n,m)\right|
  &\leq C_{\varepsilon},
\end{align}
where 
\begin{align*}
M_{m,n}
  &:=\frac{n\rho\circ\Upsilon(n,m)\log(\Upsilon(n,m)+1)}{\log(m)}.
\end{align*}
Otherwise said, it holds that 
\begin{align*}
\Psi(n,m)
  &= n\rho\circ\Upsilon(n,m)\left(1+O\left(\frac{\log(\Upsilon(n,m)+1)}{\log(m)}\right)\right)
\end{align*}
\end{Theorem}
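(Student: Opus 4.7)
The plan is to compare $\Psi(n,m)$ with $n\rho(\Upsilon(n,m))$ by exploiting parallel functional equations satisfied by both quantities, then propagate the comparison across the range $\Upsilon(n,m) \leq \log(n)^{3/5-\varepsilon}$ by induction on $u := \Upsilon(n,m)$.

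The first step is to derive a Buchstab-type identity for $\Psi$. Starting from the von Mangoldt decomposition $\log k = \sum_{p^j \mid k}\log p$ and performing Abel summation over $k \in \mathcal{Z}[n,m]$, one obtains
\begin{align*}
\Psi(n,m)\log n = \int_1^n \frac{\Psi(t,m)}{t}\,dt + \sum_{p \leq m}\sum_{j \geq 1,\, p^j \leq n} (\log p)\,\Psi(n/p^j,m),
\end{align*}
which mirrors the integrated form $u\rho(u) = \int_{u-1}^u \rho(t)\,dt$ of the delay equation \eqref{eq:rhodef}.

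Next, I would collect the standard estimates on $\rho$: the super-exponential decay $\rho(u) \leq 1/\Gamma(u+1)$, a Lipschitz-type estimate $|\rho(u)-\rho(v)| \lesssim \rho(u)\log(u+1)|u-v|$ for $|u-v|$ small, and asymptotic expansions derived from saddle-point analysis of its Laplace transform. With these in hand, I would set $R(n,m) := \Psi(n,m) - n\rho(\Upsilon(n,m))$ and prove $|R(n,m)| \leq C_\varepsilon M_{m,n}$ by induction on $u$. The base case $u \leq 1$ is trivial since $\Psi(n,m) = \lfloor n\rfloor$ and $\rho(u) = 1$. For the inductive step, substitute the approximation $\Psi(n/p^j,m) \approx (n/p^j)\rho(u - j\log p/\log m)$ into the functional equation and match the resulting weighted prime sum with the integral $\int_{u-1}^u \rho(t)\,dt$ using Mertens' theorem $\sum_{p\leq x}(\log p)/p = \log x + O(1)$ together with the prime number theorem in the Vinogradov--Korobov form
\begin{align*}
\pi(x) = \mathrm{Li}(x) + O\!\left(x\exp(-c(\log x)^{3/5}(\log\log x)^{-1/5})\right).
\end{align*}

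The principal obstacle is controlling the error propagation in the induction. Each application of the functional equation costs a factor of order $\log(u+1)/\log m$, and the recursion must be set up so that this factor, rather than its square or higher powers, governs the total error. The range restriction $\Upsilon(n,m) \leq \log(n)^{3/5-\varepsilon}$ is precisely what ensures that the PNT error term remains negligible compared with the gain per iteration; beyond this threshold, the currently known zero-free region of $\zeta$ is insufficient to close the induction. A careful split of the prime sum into \emph{small} primes (handled by the inductive hypothesis) and \emph{large} primes (handled via direct estimates on the tail, or via Perron's formula applied to $\zeta(s,m) := \prod_{p \leq m}(1-p^{-s})^{-1}$ with contour shifting into the Vinogradov--Korobov zero-free region) is required to achieve the precise dependence on $\varepsilon$ in the statement.
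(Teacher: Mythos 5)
The paper does not prove Theorem \ref{debruijn}: it is quoted as a classical result of de Bruijn \cite{MR0046375}, so there is no internal proof to compare against, and your sketch must stand on its own. As it stands it has a genuine gap. The skeleton you set up --- the identity $\Psi(n,m)\log n=\int_1^n\Psi(t,m)t^{-1}\,dt+\sum_{p\le m}\sum_{j\ge 1,\,p^j\le n}(\log p)\Psi(n/p^j,m)$, its continuous analogue $u\rho(u)=\int_{u-1}^u\rho(t)\,dt$, and an induction on $u$ for $R(n,m):=\Psi(n,m)-n\rho(\Upsilon(n,m))$ --- is a correct and standard skeleton, but the inductive step is where the entire theorem lives and you do not carry it out. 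Substituting the inductive hypothesis into the prime sum produces (i) a main term that must be matched to $n\int_{u-1}^u\rho(t)\,dt$ with a \emph{relative} error $O(\rho(u)\log(u+1)/\log m)$ --- since $\rho(u)$ is super-exponentially small over most of $\mathcal{S}_{\varepsilon}$, the comparison with the Vinogradov--Korobov error term must be done multiplicatively, which you only gesture at --- and (ii) an accumulated error $\sum_{p,j}(\log p)\,|R(n/p^j,m)|$ which, bounded naively by the inductive bound times $\sum_{p\le m}\log p/p\approx\log m$, returns exactly $C_\varepsilon M_{m,n}\log n$ with no room to spare, so the constant would grow at every step. Closing the induction requires exploiting the decay of $\rho$ between $u$ and $u-j\log p/\log m$ (or a Gr\"onwall-type argument on $\sup |R|/(n\rho)$). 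You explicitly label this ``the principal obstacle'' without supplying the mechanism that overcomes it, so the proof is incomplete precisely at its only hard point.

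A secondary remark on method and scope: the route you chose (functional equation plus induction on $u$) is Hildebrand's 1986 argument, which yields the much stronger Theorem \ref{SharpPsiHildebrand} valid for $\Upsilon(n,m)\le\exp((\log m)^{3/5-\varepsilon})$; de Bruijn's original proof instead iterates the Buchstab identity and compares $\Psi(n,m)$ with $n\int_0^{\infty}\rho(u-v)\,d\bigl(\lfloor m^{v}\rfloor m^{-v}\bigr)$ via the prime number theorem with the zero-free-region error term, which is what ties his range to $(\log n)^{3/5-\varepsilon}$. Your write-up conflates the two: if the induction is run Hildebrand-style, the restriction to $\mathcal{S}_{\varepsilon}$ is not ``precisely what ensures the PNT error is negligible,'' since that method reaches far beyond $\mathcal{S}_{\varepsilon}$; if the argument is de Bruijn's, the induction-on-$u$ framing is not an accurate description of it. Either way the quantitative core is missing.
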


The mathematical community has put many efforts in finding sharp upper bounds asymptotically equivalent to \eqref{PsiapproxBruijn}, uniformly over a larger set than $\mathcal{S}_{\varepsilon}$. Next, we mention some of the most relevant ones for the purposes of our paper. We begin with the work by Hensley in \cite{MR0814007}, where it was shown that the relaxation 
\begin{align*}
\Psi(n,m)
  &\geq n\rho\circ\Upsilon(n,m)\left(1+O\left(\frac{\log(\Upsilon(n,m)+1)}{\log(m)}\right)\right)
\end{align*}
holds uniformly over considerably larger set than $\mathcal{S}_{\varepsilon}$, thus reducing the problem of estimating the exact value of $\Psi(n,m)$, to simply finding upper bounds for it. A crucial improvement to the de Bruijn estimation was done by Hildebrand 
 in \cite{MR0831874}, where the following result was proved

\begin{Theorem}[Hildebrand, 1986]\label{SharpPsiHildebrand}
For every $\varepsilon>0$, there exits a constant $C_{\varepsilon}>0$, such that the estimation \eqref{PsiapproxBruijn} holds uniformly over $(n,m)\in\tilde{\mathcal{S}}_{\varepsilon}$, with 
\begin{align*}
\tilde{\mathcal{S}}_{\varepsilon}
  &= \{(n,m)\in\N^2\ |\ m\neq 1\text{ and } \Upsilon(n,m)\leq e^{\log(m)^{3/5-\varepsilon}}\}.
\end{align*}
\end{Theorem}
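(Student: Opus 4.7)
The plan is to establish a functional equation for $\Psi(n,m)$ analogous to the delay equation for $\rho$, and to propagate the de Bruijn estimate of Theorem \ref{debruijn} by induction on $u := \Upsilon(n,m)$. Concretely, I would begin from the identity
\[
\Psi(x,y)\log x = \int_1^x \Psi(t,y)\,\frac{dt}{t} + \sum_{\substack{p\in\mathcal{P},\, p\leq y\\ k\geq 1,\, p^k\leq x}} \Psi(x/p^k,y)\,\log p,
\]
which comes from evaluating $\sum_{n\in\mathcal{Z}[x,y]}\log n$ in two ways via Abel summation and the decomposition $\log n = \sum_{p^k\mid n}\log p$. Partial summation against the Chebyshev-type function $\theta(t,y) := \sum_{p\in\mathcal{P},\,p\leq \min(t,y)}\log p$ turns the double sum into a Stieltjes integral whose kernel, by the prime number theorem with an explicit error, approximates $dt/t$. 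After dividing by $x$ and substituting $x=y^u$, $t=y^v$, this recasts as an approximate version of the integrated delay equation $u\rho(u) = \int_{u-1}^u \rho(v)\,dv$ satisfied by the Dickman function.

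Subtracting the two identities, the signed error $R(x,y) := \Psi(x,y) - x\,\rho\!\circ\!\Upsilon(x,y)$ satisfies an approximate fixed-point relation in which $R(x,y)\log x$ is expressed as an integral of $R(\cdot,y)$ on smaller arguments, plus a driving term coming from $\theta(t)-t$. The base case, $u \leq (\log m)^{3/5-\varepsilon}$, is handed over directly by Theorem \ref{debruijn}. To push the range up to $u\leq \exp((\log m)^{3/5-\varepsilon})$, one feeds in the Vinogradov--Korobov form of the prime number theorem,
\[
\theta(t) = t + O\bigl(t\exp(-c(\log t)^{3/5-\varepsilon/2})\bigr),
\]
which is the strongest unconditional bound available for $\theta$ and is precisely what dictates the admissible range in the statement.

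The main obstacle will be the bookkeeping in the induction. The target relative error $\log(u+1)/\log m$ is only barely larger than de Bruijn's, yet covering the extended range requires iterating the functional equation essentially $u$ times, and a naive induction would inflate the constant multiplicatively at each step. The trick is to measure $R(x,y)$ relative to $x\rho\circ\Upsilon(x,y)$ rather than in absolute scale, exploiting the decay $\rho(u) = u^{-u(1+o(1))}$ of the Dickman function so that multiplicative inflation is converted into controlled additive accumulation. Secondary technicalities include absorbing the contribution of higher prime powers $p^k$ with $k\geq 2$, whose total weight is $O(x^{1/2})$ and therefore negligible against $x\rho(u)$ throughout the range, and using the smoothness of $\rho$ away from integers to pass from the integrated to the pointwise form of the delay equation without extra loss.
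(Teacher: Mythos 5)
This statement is Hildebrand's 1986 theorem, which the paper quotes from \cite{MR0831874} as background and does not prove; there is no internal argument to compare yours against. Your outline does track the strategy of Hildebrand's original paper: the functional equation $\Psi(x,y)\log x = \int_1^x \Psi(t,y)\,t^{-1}dt + \sum_{p\le y,\,p^k\le x}\Psi(x/p^k,y)\log p$, partial summation against $\theta$, the Vinogradov--Korobov form of the prime number theorem as the source of the range $u\le \exp((\log y)^{3/5-\varepsilon})$, and an iteration in $u$ seeded by de Bruijn's estimate. Those are indeed the right ingredients, and you correctly identify that the admissible range is dictated by the zero-free region.

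The genuine gap is that the induction step --- which is the entire technical content of Hildebrand's proof --- is described only as an intention. Saying that measuring the error relative to $x\rho\circ\Upsilon(x,y)$ ``converts multiplicative inflation into controlled additive accumulation'' does not by itself close the iteration: after dividing by $x\rho(u)$, the functional equation expresses the relative error at level $u$ as an average of relative errors at levels $u-v$, $0\le v\le 1$, weighted by $\rho(u-v)/(u\rho(u))$, and one must verify that this weight is a genuine averaging operator (total mass $1+O(\cdot)$) and that the driving term contributed by $\theta(t)-t$ is $O(\log(u+1)/\log y)$ \emph{after} the normalization by $\rho(u)$, using quantitative slow-variation bounds such as $\rho(u-v)/\rho(u)\ll u^{v}$ and $-\rho'(u)/\rho(u)\asymp \log u$. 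Without this the error bound degrades by a constant factor at each unit step in $u$, which over $u$ as large as $\exp((\log y)^{3/5-\varepsilon})$ destroys the claimed uniformity. A second, smaller gap: the target estimate is a \emph{relative} error of size $\log(u+1)/\log y$, so the higher prime powers $k\ge 2$ cannot simply be discarded as $O(x^{1/2})$ against $x\rho(u)$ --- in the extended range $\rho(u)=u^{-u(1+o(1))}$ can be far smaller than $x^{-1/2}$; their contribution must instead be bounded inside the functional equation, where it appears as $\sum_{p\le y}\sum_{k\ge 2}\Psi(x/p^k,y)\log p \ll \Psi(x,y)\sum_p \log p/p^2$ by monotonicity, which is admissible. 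As it stands the proposal is a faithful road map to the cited proof rather than a proof.
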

\begin{remark}
The function $e^{\log(m)^{3/5-\varepsilon}}$ appearing in the threshold is asymptotically smaller than any function of the form $y^{\alpha}$, for every $\alpha>0$. 	
\end{remark}
Shortly after, Hildebrand presents in \cite{MR0804201}, an equivalence between the resolution of this problem and the Riemann hypothesis. The precise statement reads as follows 
\begin{Theorem}[Hildebrand, 1984]\label{thm:resolutionRiemann}
For a fixed value of $\varepsilon>0$, we define the set 
\begin{align}\label{def:Sepssharp}
\hat{\mathcal{S}}_{\varepsilon}
  &:= \{(n,m)\in\N^2\ |\ m\neq 1\text{ and } \Upsilon(n,m)\leq m^{1/2-\varepsilon}\}.
\end{align}
Then, provided that exists a mapping $\varepsilon\mapsto C_{\varepsilon}$ satisfying
\begin{align}\label{eq:condforriemmequiv}
\sup_{(m,n)\in \hat{\mathcal{S}}_{\varepsilon}}\frac{\log(m)}{\log(\Upsilon(n,m)+1)}\left|\log\left(\frac{\Psi(n,m)}{n\rho\circ\Upsilon(n,m)}\right)\right|
  &\leq C_{\varepsilon},
\end{align}
then the Riemann hypothesis holds.
\end{Theorem}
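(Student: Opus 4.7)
The plan is to deduce the Riemann hypothesis from \eqref{eq:condforriemmequiv} by converting the sharp approximation of $\Psi$ into the classical Chebyshev-type bound
$$\vartheta(y) := \sum_{p\leq y}\log p = y + O_\delta(y^{1/2+\delta})\qquad\text{for every }\delta>0,$$
which is equivalent to the Riemann hypothesis by von Koch's theorem. Since $\log(\Upsilon(n,m)+1)/\log m$ is uniformly bounded on $\hat{\mathcal{S}}_\varepsilon$, the hypothesis \eqref{eq:condforriemmequiv} is equivalent to the multiplicative version
$$\Psi(n,m) = n\,\rho\circ\Upsilon(n,m)\left(1 + O_\varepsilon\!\left(\tfrac{\log(\Upsilon(n,m)+1)}{\log m}\right)\right)\qquad\text{on }\hat{\mathcal{S}}_\varepsilon,$$
which I will use as the working form of the estimate.

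Next, I will exploit the functional identity
$$\Psi(n,m)\log n = \int_1^n \frac{\Psi(t,m)}{t}\,dt + \int_1^m \Psi(n/t,m)\,d\vartheta(t) + E(n,m),$$
coming from $\log k = \sum_{p^\nu\|k}\nu\log p$ summed over $k\in\mathcal{Z}[n,m]$, where $E(n,m)=O(n)$ collects the prime-power remainder ($\nu\geq 2$). Using the delay equation $u\rho'(u)=-\rho(u-1)$ and the substitution $u=\log t/\log m$, one checks that the corresponding identity in the continuous Dickman model
$$n\rho(\Upsilon(n,m))\log n = \int_1^n \rho(\Upsilon(t,m))\,dt + n\int_1^m \tfrac{1}{t}\rho(\Upsilon(n/t,m))\,dt$$
holds exactly.

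Subtracting the continuous identity from the functional identity after substituting the multiplicative estimate for $\Psi$ yields
$$\int_1^m \Psi(n/t,m)\,d(\vartheta(t)-t) = O_\varepsilon\!\left(n\,\rho(\Upsilon(n,m))\log(\Upsilon(n,m)+1)\right).$$
Integration by parts converts the left-hand side into an integral of $\vartheta(t)-t$ against $-d_t\Psi(n/t,m)$, and by choosing $n$ so that $\Upsilon(n,m)$ lies near the boundary $m^{1/2-\varepsilon}$ of $\hat{\mathcal{S}}_\varepsilon$, the saddle of $t\mapsto\Psi(n/t,m)$ becomes sharply concentrated. Localizing via this saddle yields the pointwise estimate $\vartheta(y)-y = O_\varepsilon(y^{1/2+\varepsilon})$ for every $\varepsilon>0$, which is the classical equivalent of RH.

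The principal obstacle, in my view, is the passage from the integrated bound to the pointwise estimate on $\vartheta(y)-y$. This step requires sharp saddle-point control on $\Psi(n/t,m)$ viewed as a function of $t$, so that oscillations of $\vartheta-\mathrm{id}$ at scale $\sqrt{m}$ are actually detected by integration against $\Psi(n/\cdot,m)$. The threshold $\Upsilon(n,m)\leq m^{1/2-\varepsilon}$ appearing in \eqref{def:Sepssharp} is calibrated precisely so that the saddle probes the $\sqrt{m}$-scale matching the square-root barrier of RH; executing this step rigorously demands quantitative smoothness estimates for $\rho$ and careful control of the prime-power remainder $E(n,m)$, and constitutes the technical heart of Hildebrand's original argument.
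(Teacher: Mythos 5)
This is a quoted result (Hildebrand 1984, \cite{MR0804201}); the paper offers no proof of it, so your attempt can only be judged on its own merits. Your overall strategy --- feed the multiplicative estimate into the functional identity $\Psi(n,m)\log n = \int_1^n \Psi(t,m)t^{-1}\,dt + \sum_{p^\nu\le n,\,p\le m}\log p\,\Psi(n/p^\nu,m)$, compare with the Dickman analogue, and read off information about $\vartheta(t)-t$ at the scale $\sqrt{m}$ --- is indeed the right circle of ideas, and your remark that the threshold $\Upsilon(n,m)\le m^{1/2-\varepsilon}$ is calibrated to the square-root barrier is correct. Two points, however, are genuinely problematic.

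First, a small but real error: the continuous identity you claim ``holds exactly'' does not. Using $u\rho(u)=\int_{u-1}^{u}\rho(v)\,dv$ and the substitution $t=m^{s}$, one finds $n\int_1^m t^{-1}\rho(\Upsilon(n/t,m))\,dt = n\rho(\Upsilon(n,m))\log n$ already, so your additional term $\int_1^n\rho(\Upsilon(t,m))\,dt$ is an excess (it is of relative size $O(1/(\Upsilon\log m))$ and hence harmless, but the identity must be stated as an approximation, not an equality). Second, and decisively: the passage from the integrated bound $\int_1^m \Psi(n/t,m)\,d(\vartheta(t)-t)=O(n\rho\log(\Upsilon+1))$ to the pointwise bound $\vartheta(y)-y=O_\varepsilon(y^{1/2+\varepsilon})$ is asserted but not carried out, and the direct route you propose is doubtful: $t\mapsto\Psi(n/t,m)$ is a positive, monotone kernel, and integrating an oscillating quantity against such a kernel cannot, by itself, yield a pointwise bound --- cancellation in $\vartheta-\mathrm{id}$ could hide arbitrarily large excursions. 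The standard (and, as far as I know, unavoidable) route is by contraposition: assume a zero $\beta+i\gamma$ of $\zeta$ with $\beta>1/2$, invoke a Landau--Phragm\'en oscillation theorem to produce excursions of $\vartheta(t)-t$ of size $t^{\beta-\delta}$ with controlled sign over controlled ranges, and then choose $u$ near $m^{1-\beta+\delta}$ so that the weighted prime sum resonates with the excursion and violates the assumed estimate. Since you acknowledge this step as ``the technical heart'' without supplying it, the proposal is an outline of the correct strategy rather than a proof; the gap is precisely the step on which the theorem turns.
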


\begin{remark}
The condition \eqref{eq:condforriemmequiv} can be rephrased in the ''big $O$'' notation as 
\begin{align*}
\Psi(n,m)
  &= n\rho\circ\Upsilon(n,m)\exp\left\{O\left(\frac{\log(\Upsilon(n,m)+1)}{\log(m)}\right)\right\}.
\end{align*}	
\end{remark}
Although we do not treat directly with the function $\Psi$, the above results will serve as benchmarks for setting up appropriate parallelisms for  $\Psi_{H}$.

\section{Main results}\label{sec:mainrsults}
This section describes our findings regarding the analysis of $\Psi_{H}$. Since all of the prime divisors of $H_{n}$ lie in $[n]$, then  $\psi_H(n,m)=1$ for all $m\geq n$, so the problem of estimating $\psi_H(n,m)$ can be localized into the regime $m\leq n$.\\

\noindent In the sequel, $L_{m}$ will denote the Harmonic partial sum in $[m]$ and $I_{m}$ the product of the prime numbers belonging to $[m]$, namely,
\begin{align}\label{eq:LnIndef}
L_{m}
  :=\sum_{j=1}^{m}\frac{1}{j}\ \ \ \ \ \ \ \ \ \ \ \ \  
I_{m}
  :=\prod_{p\leq m}(1-1/p).
\end{align}
The next result is the main contribution of our paper
\begin{Theorem}\label{thm:mainone}
There exists a constant $C>0$, such that for  $16\leq m$,
\begin{align}\label{eq:mainone}
\sup_{ m\leq n}\log(n)\left|\Psi_{H}(n,m)- \frac{n}{\Upsilon(n,m)}\mathcal{I}[\rho]\circ\Upsilon(n,m)\right|
  &\leq C,
\end{align}
where $\mathcal{I}$ denotes the integral operator
\begin{align*}
\mathcal{I}[\rho](x)
  &:=\int_{0}^{x}\rho(s)ds.	
\end{align*}
Otherwise said, the function $\Psi_H$ satisfies  
\begin{align*}
\Psi_{H}(n,m)
  &= \frac{n}{\Upsilon(n,m)}\mathcal{I}[\rho]\circ\Upsilon(n,m)+O(1/\log(n)),
\end{align*}
uniformly over $16\leq m\leq n$.
\end{Theorem}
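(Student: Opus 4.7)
The plan is to reduce the statement to a Dickman-type distributional approximation via a stochastic representation of $H_n$, then apply Stein's method for the Dickman law with Heaviside test functions, and close the estimate with Mertens' first and third theorems. First, following Proposition 3.1 and the remarks after (1.3), I would use the geometric representation of $H_n$: let $\xi_p \sim \mathrm{Geom}(1-1/p)$ be independent over the primes, so that the product $Y := \prod_{p \leq n} p^{\xi_p}$ satisfies $\Pb[Y=k] = I_n/k$ for every positive integer $k$ whose prime factors lie in $[n]$, and $H_n$ is distributed as $Y$ conditioned on $\{Y \leq n\}$. Since $\{\psi(Y) \leq m\}$ is exactly $\{\xi_p = 0\text{ for all } p \in (m,n]\}$, an immediate factorization yields
\[\Psi_H(n,m) = \frac{n\,\Pb[W_m \leq \log n]}{I_m L_n}, \qquad W_m := \sum_{p \in \mathcal{P}\cap[m]} \xi_p \log p,\]
and after rescaling $T_m := W_m/\log m$ this reads $\Psi_H(n,m) = (n/(I_m L_n))\,F_{T_m}(\Upsilon(n,m))$, where $F_{T_m}$ is the CDF of $T_m$.

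The crucial step is then a sharp Kolmogorov-type estimate for $T_m$. This random variable converges in law to the Dickman variable $D$, whose CDF equals $F_D = e^{-\gamma}\mathcal{I}[\rho]$. I would obtain a bound of the form $\sup_x |F_{T_m}(x) - F_D(x)| = O(1/\log m)$ by Stein's method for the Dickman law. Since the test functions $\mathbf{1}_{\{\cdot\leq x\}}$ are discontinuous, the solution of the Dickman--Stein equation has only limited regularity, which is exactly where the recent estimates of Bhattacharjee--Schulte come in. Coupling these regularity estimates with a size-bias (or exchangeable-pair) construction for $T_m$ reduces the problem to controlling moment-type quantities $\sum_{p \leq m} (\log p/\log m)^k/p$, which in turn are estimated by Mertens' first theorem.

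Finally, Mertens' first gives $L_n = \log n + \gamma + O(1/n)$ and Mertens' third gives $I_m \log m = e^{-\gamma}(1 + O(1/\log m))$, so that $I_m L_n = e^{-\gamma}\Upsilon(n,m)(1 + O(1/\log m))$. Substituting the Stein-method bound into the identity from the first step and expanding $1/(I_m L_n)$ around its Mertens proxy, the leading contribution $n F_D(\Upsilon)/(I_m L_n)$ collapses onto $n\mathcal{I}[\rho](\Upsilon)/\Upsilon$, and the remainder splits into a Stein-error piece and a Mertens-error piece. Careful tracking of the residues, using the bounds $\mathcal{I}[\rho](x)\leq e^{\gamma}$ together with the superexponential decay of $\rho$ at infinity, collapses the combined error into the desired $O(1/\log n)$ after multiplication by $\log n$.

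The main obstacle will be the Kolmogorov bound in the second step. Kolmogorov distance is considerably more delicate than, say, a Wasserstein-type metric, because the Stein solution associated to a Heaviside test function is non-smooth at the jump, and one must control the dependence of the resulting bound on the threshold $\Upsilon(n,m)$. The two regimes $\Upsilon \approx 1$ (where $\rho$ has a derivative discontinuity) and $\Upsilon \to \infty$ (where $\rho$ decays superexponentially but the prefactor $n/(I_m L_n) \sim e^{\gamma} n/\log n$ is large) are where the argument is tightest, and verifying that the Stein residues cancel against the Mertens correction terms at the precision required by (1.4) is where I expect the heavy lifting to be.
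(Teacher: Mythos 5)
Your proposal follows essentially the same route as the paper: the geometric representation of the prime multiplicities of $H_n$ yielding the exact identity $\Psi_H(n,m)=n\,\Pb[\sum_{p\le m}\xi_p\log p\le\log n]/(I_mL_n)$, a Kolmogorov-distance Dickman approximation for the rescaled weighted geometric sum proved by Stein's method with the Bhattacharjee--Schulte regularity estimates and a size-bias-type identity, and finally Mertens' first and third theorems (plus the harmonic-sum estimate for $L_n$) to convert the $O(1/\log m)$ Stein error, via the prefactor $1/\Upsilon(n,m)$, into the stated $O(1/\log n)$. The only cosmetic deviation is that you normalize by $\log m$ rather than by $\lambda_m=\sum_{p\le m}\log p/p$ as the paper does, a difference absorbed by Mertens' first theorem exactly as in the paper's final mean-value-theorem step.
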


A version of Theorem \ref{thm:mainone} was first addressed by Bruijn and Van Lint in \cite{MR174530} for the case where \( \sqrt{\log(n)}\leq \log(m)\leq \log(n) \). Related work followed in the contributions of Song \cite{MR1889623} and Tenenbaum \cite{MR1829871}, where \( 1/n \) was replaced by a more general weight \( h(n)/n \). While these results share some similarities with our approach, there are two fundamental differences. First, our method is entirely probabilistic, whereas all the aforementioned works rely on techniques from complex analysis. Second, the range of values for which our result is applicable is significantly broader, contrasting with the conditions in \cite{MR174530}, \cite{MR1889623}, and \cite{MR1829871}.\\

\noindent Our approach for proving \eqref{thm:mainone} is based on a representation of the law of $H_{n}$ in terms of a sequence of independent geometric random variables $\{\xi_{p}\}_{p\in\mathcal{P}}$, with $\xi_{p}$ satisfying 
\begin{align}\label{eq:deflawxip}
\Pb[\xi_p=k]
  &=(1-1/p)p^{-k},
\end{align}
for $k\geq 0$.  As explained in detail in Section \ref{sec:profofmainthm}, a stochastic representation of the multiplicities of $H_n$ in terms of the $\xi_p$,  will allow us to reduce the proof of Theorem \ref{thm:condindeprep}, to showing that the variables
\begin{align}\label{eq:Z_mdef}
Z_{m}
  &:=\sum_{p\in\mathcal{P}\cap[m]}\log(p)\xi_{p},
\end{align}
satisfy  
\begin{align}\label{eq:supnormapproxdicmanandZm}
|\Pb[\E[Z_{m}]^{-1}Z_{m}\leq z]-e^{-\gamma}\rho(z)|
  &\leq \frac{C}{\log(m)}(1+z^{-1}),
\end{align}
where $z\geq 0$ and $C$ is a universal constant independent of $m$ and $z$. This approximation is the main achievement of our proof, and its resolution will make use of the Stein method for the Dickman distribution, first introduced by Bhattacharjee and Goldstein in \cite{MR4003564} for handling approximations in the Wasserstein distance for Dickman distributed random variables, and further refined by Bhattacharjee and Schulte in  \cite{BattSchulte} for dealing with its counterpart for  the Kolmogorov metric. Equation \eqref{eq:supnormapproxdicmanandZm} resambles the findings from \cite[Theorem\ 1.1]{BattSchulte} and is of interest on its own.

\section{Preliminaries}\label{sec:prelim}
\noindent In this section, we present some important preliminaries required for the proof of Theorem \ref{thm:mainone}. There are four fundamental parts that will be utilized: (i) A methodology for assessing Kolmogorov distances for Dickman approximation, previously studied in \cite{BattSchulte,MR4003564} (ii) A stochastic representation of the multiplicities of $H_n$ (iii) Some elementary number theory estimations by Mertens and (iv) Basic formulas regarding bias transforms. \\

\subsection{Dickman distributional approximations}
In the sequel, $\gamma$ will denote the Euler-Mascheroni constant. Useful information on $\rho$ can be obtained from a probabilistic perspective by regarding the mapping $x\mapsto e^{-\gamma}\rho(x)$ as the density of a random variable. The verification of the condition $\int_{\R_{+}}e^{-\gamma}\rho(x)=1$ can be obtained from an adequate analysis of the  Laplace transform of $\rho$, as explained in \cite[Lemma 2.6.]{MR1265913} (see also \cite{MR2378655}). We say that a random variable $D$ is distributed according to the Dickman law if its cumulative distribution function is given by $e^{-\gamma}\mathcal{I}[\rho]$, where $\mathcal{I}[\rho]$ denotes the integral of $\rho$ over $[0,x]$. The main use we will give to this interpretation is the well-known characterization of the Dickman distribution in terms of its Bias transform (see \cite{MR4003564}), which states that for every bounded and smooth function $f:\R_{+}\rightarrow\R$, it holds that
\begin{align}\label{eq:biastransfchar}
\E[Df(D)]
  &=\int_0^{1}\E[f(D+t)]dt.
\end{align}
This identity can be easily obtained from the fact that if $D$ is distributed according to $e^{-\gamma}\rho$ and $U$ is a uniform random variable independent of $D$, then 
\begin{align}\label{eq:DcharactbyU}
D
  &\stackrel{Law}{=}U(D+1).
\end{align}
The reader is referred to \cite{MR3968515} for a proof of \eqref{eq:DcharactbyU}. The study of distributional approximations of random variables $X$ by means of sharp estimations on $\E[Xf(X)]$, for $f$ ranging over an adequate collection of test functions initiated with the work by Stein in \cite{MR0402873} in the study of Gaussian approximations for sums of weakly dependent random variables and has been developed for many different distributions, including exponential, beta, gamma, Poisson, geometric and Dickman, among many others (see \cite{MR2789507,MR0428387,MR3788176,MR3418541}). A very useful adaptation of this methodology to the realm of weighted sums of random variables was presented in Theorem 1.4 and Theorem 1.5 in \cite{BattSchulte}, whose proof serves as starting point for our application in hand. Replacing the role of $f$ in \eqref{eq:biastransfchar} by $f^{\prime}$, one obtains the following characterization, which can be found in \cite[Lemma 3.2]{MR4003564}
\begin{lemma}
The variable $W$ follows a Dickman law if and only if for every Lipchitz function $f$, the following identity holds
\begin{align*}
\E[Wf^{\prime}(W)-f(W + 1) + f(W)] = 0. 
\end{align*}
\end{lemma}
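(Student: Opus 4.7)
The plan is to deduce the lemma from the bias transform characterization \eqref{eq:biastransfchar} via the fundamental theorem of calculus. For any Lipschitz $f$, the derivative $f^{\prime}$ exists almost everywhere and is bounded, and absolute continuity gives
\begin{align*}
f(W+1)-f(W)=\int_0^1 f^{\prime}(W+t)\,dt
\end{align*}
almost surely. Taking expectations and applying Fubini, the stated identity $\E[Wf^{\prime}(W)-f(W+1)+f(W)]=0$ is equivalent to
\begin{align*}
\E[Wf^{\prime}(W)]=\int_0^1 \E[f^{\prime}(W+t)]\,dt,
\end{align*}
which is precisely the size-bias equation \eqref{eq:biastransfchar} with the test function taken to be $f^{\prime}$.

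For the forward implication, assume $W$ follows the Dickman law. A routine mollification of $f^{\prime}$ combined with dominated convergence extends \eqref{eq:biastransfchar}, originally stated for bounded smooth test functions, to bounded measurable ones, and in particular to $f^{\prime}$. The equivalence above then yields the claim.

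For the reverse implication, suppose the identity holds for every Lipschitz $f$. Any bounded measurable $g:\R_{+}\rightarrow\R$ arises as $f^{\prime}$ for the Lipschitz antiderivative $f(x):=\int_0^x g(s)\,ds$, so the equivalence shows
\begin{align*}
\E[Wg(W)]=\int_0^1 \E[g(W+t)]\,dt
\end{align*}
for every such $g$. Specializing to $g(x)=e^{-\lambda x}$ with $\lambda>0$ and writing $\phi(\lambda):=\E[e^{-\lambda W}]$, the left-hand side equals $-\phi^{\prime}(\lambda)$ while the right-hand side equals $\phi(\lambda)(1-e^{-\lambda})/\lambda$. This produces the first-order linear ODE
\begin{align*}
\lambda\phi^{\prime}(\lambda)=(e^{-\lambda}-1)\phi(\lambda),\qquad \phi(0)=1,
\end{align*}
whose unique solution $\phi(\lambda)=\exp\bigl(\int_0^{\lambda}(e^{-s}-1)/s\,ds\bigr)$ is the Laplace transform of the Dickman distribution. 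By uniqueness of Laplace transforms on $\R_{+}$, $W$ must be Dickman distributed.

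The main obstacle is the mild technical point that \eqref{eq:biastransfchar} is stated for bounded smooth $f$, whereas $f^{\prime}$ for Lipschitz $f$ is only bounded measurable; this is handled by a standard mollification plus dominated convergence argument. Once this extension is in place, the lemma follows from the elementary manipulations above.
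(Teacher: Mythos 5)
Your argument is correct, and it actually supplies more than the paper does: the paper proves nothing here, simply remarking that the identity arises by "replacing the role of $f$ in \eqref{eq:biastransfchar} by $f^{\prime}$" and citing Lemma 3.2 of \cite{MR4003564}. Your forward direction is exactly that remark made rigorous — fundamental theorem of calculus for the absolutely continuous $f$, Fubini, and the bias-transform identity applied to $f^{\prime}$ (the mollification step is needed and is legitimate because the Dickman law and its size-biased version are absolutely continuous, so a.e.\ convergence of mollifications of $f'$ suffices). Your reverse direction, which the paper delegates entirely to the cited reference, goes through the Laplace transform: the test functions $f(x)=(1-e^{-\lambda x})/\lambda$ are smooth and Lipschitz, the resulting ODE $\lambda\phi'(\lambda)=(e^{-\lambda}-1)\phi(\lambda)$ has a coefficient extending continuously to $\lambda=0$, so the solution with $\phi(0)=1$ is unique and matches the known Dickman Laplace transform (cf.\ \cite[Lemma 2.6]{MR1265913}). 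This is a clean, self-contained alternative to the fixed-point/coupling route via \eqref{eq:DcharactbyU} used in the literature.

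Two small points worth flagging. First, in the reverse direction you pass from Lipschitz $f$ to arbitrary bounded measurable $g=f'$; since $f'(x)=g(x)$ only at Lebesgue points of $g$ and you do not yet know that $W$ has an absolutely continuous law, the identity $f'(W)=g(W)$ a.s.\ is not automatic for general $g$. This is harmless in your proof because the only test functions you ultimately use are the smooth $g(x)=e^{-\lambda x}$, but the intermediate claim for all bounded measurable $g$ should be dropped or restricted to continuous $g$. Second, the statement implicitly requires $W\ge 0$ with $\E[W]<\infty$ for the expectations to be defined (taking $f(x)=x$ already forces $\E[W]=1$); your Laplace-transform step uses this positivity, so it is worth stating the hypothesis explicitly.
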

Let $\mathcal{C}$ denote the collection of test functions 
\begin{align*}
\mathcal{C}
  &:=\{\Indi{[0,z]}\ ;\ z\in\R_{+}\}.
\end{align*}
Taking into consideration the above lemma, we implement Stein's heuristics in the context for the Dickman law, which roughly speaking, aims to conclude an approximation of the type $\E[h(W)]\approx\E[h(D)]$ for $h\in\mathcal{C}$ by an approximation of the type 
$$\E[Wf^{\prime}(W)-f(W + 1) + f(W)]\approx 0,$$ 
for $f$ belonging to a family of test functions determined by $\mathcal{C}$. This idea can be formalized by considering the Stein equation 
\begin{align}\label{eq:steinequation}
xf_{z}^{\prime}(x)+f_{z}(x)-f_{z}(x+1)=h_{z}(x)-\E[h_{z}(D)],
\end{align}  
where $h_{z}(x):=\Indi{[0,z]}(x)$. Evaluation at $x$ equal to the variable of interest, taking expectation and then supremum with respect to $z$,  then yields a tractable expression for the distance in Kolmogorov of the variable of interest and the Dickman law.  The solution to \eqref{eq:steinequation}, along with a description of its regularity properties is presented in Theorem 1.9 and Lemma 3.3, from \cite{BattSchulte}. These results are summarized in Lemma \ref{lem:rechnicalf1} below.

\begin{lemma}\label{lem:rechnicalf1}
Let $D$ be a random variable following the Dickman law. There exists a solution $f_z$ to the equation \eqref{eq:steinequation}, admitting a decomposition of the type  $f_{z}=f_{1,z}+f_{2,z}$, where
\begin{align}\label{eq:f1def}
f_{1,z}(x)
  &:=1\wedge (z/x)-\Pb[D\leq z], 
\end{align}
and $f_{2,z}$ satisfies $f_{2,z}^{\prime}=u_{z,+}-u_{z,-}$, for functions $u_{z,+},u_{z,-}$ non-negative and non-increasing, with 
$$\|u_{z,+}\|_{\infty},\|u_{z,-}\|_{\infty}\leq 1.$$ 
Moreover, for every random variable $W\geq 0$ and constants  $t>0$, $\alpha>0$ and $u\geq 0$, 
\begin{align*}
\E\left[\frac{\alpha^{t}}{(W+u)^{1+t}}\Indi{\{W+u>\alpha\}}\right]
  &\leq \zeta(1+t)\left(C+2\frac{d_{K}(W,D)}{\alpha\vee u}\right),
\end{align*}
for some constant $C\geq 0$, where $\zeta$ denotes the Riemann zeta function.
\end{lemma}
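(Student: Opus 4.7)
The decomposition $f_z = f_{1,z} + f_{2,z}$ with the stated regularity properties is a direct summary of Theorem 1.9 and Lemma 3.3 of \cite{BattSchulte}, so I would simply quote those results. The substantive content requiring a proof is the tail moment estimate, on which I focus.

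The plan is to transfer the integration from the law of $W$ to that of the reference Dickman variable $D$ by means of the Kolmogorov distance, and then bound the resulting Dickman moment in closed form. Setting $V := W + u$ and $\phi(v) := \alpha^t v^{-1-t}\Indi{\{v > \alpha\}}$, I introduce $\tilde h(w) := \phi(w+u)$, so that the left-hand side equals $\E[\tilde h(W)]$. Since $\tilde h$ is of bounded variation with $\tilde h(\infty)=0$, a standard integration by parts against the CDFs of $W$ and $D$ yields
$$\bigl|\E[\tilde h(W)] - \E[\tilde h(D)]\bigr| \leq d_K(W,D) \cdot \|\tilde h\|_{TV}.$$

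Next I would estimate $\|\tilde h\|_{TV}$ by a case split on $\alpha$ versus $u$. When $\alpha \leq u$, $\tilde h$ is smooth and non-increasing on $[0,\infty)$, so $\|\tilde h\|_{TV} = \tilde h(0) = \alpha^t/u^{1+t} \leq 1/u$. When $\alpha > u$, $\tilde h$ vanishes on $[0,\alpha - u)$, jumps up to $1/\alpha$ at $w = \alpha - u$, then decays smoothly back to zero; the jump and the absolutely continuous part each contribute $1/\alpha$, for a total of $2/\alpha$. In either regime this gives $\|\tilde h\|_{TV} \leq 2/(\alpha \vee u)$, which is exactly the Kolmogorov-distance prefactor appearing in the target inequality.

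For the Dickman moment, the trivial bound $\rho \leq 1$ and an explicit computation give $\E[\phi(D+u)] \leq (e^{-\gamma}/t)(\alpha/(\alpha \vee u))^t \leq e^{-\gamma}/t$, and the elementary comparison $\zeta(1+t) \geq \int_1^\infty x^{-1-t}\,dx = 1/t$ then absorbs this factor into $\zeta(1+t)$. Combining the two pieces yields the bound with $C = e^{-\gamma}$. I expect the main delicacy to lie in the total-variation bookkeeping around the jump of $\tilde h$ at $w = \alpha - u$: the indicator factor in $\phi$ introduces a discontinuity whose atomic contribution to $d\tilde h$ must be properly tracked both in the integration-by-parts identity and in the case-dependent estimate of $\|\tilde h\|_{TV}$, and the split on whether $\alpha \leq u$ is what produces the unified bound $2/(\alpha \vee u)$.
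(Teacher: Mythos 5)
Your proposal is correct, and it actually does more than the paper does: the paper gives no proof of this lemma at all, importing both the decomposition $f_z=f_{1,z}+f_{2,z}$ and the tail-moment bound wholesale from Theorem 1.9 and Lemma 3.3 of \cite{BattSchulte}. You quote the same reference for the decomposition (which is the right call) but then supply a self-contained argument for the moment estimate, and that argument checks out. The transfer step $|\E[\tilde h(W)]-\E[\tilde h(D)]|\leq d_K(W,D)\,\|\tilde h\|_{TV}$ is legitimate because $F_D$ is continuous (the Dickman law has the density $e^{-\gamma}\rho$), so the atom of $d\tilde h$ at $w=\alpha-u$ causes no ambiguity between $\Pb[W\leq s]$ and $\Pb[W<s]$; your case split correctly yields $\|\tilde h\|_{TV}\leq 2/(\alpha\vee u)$ (jump of size $1/\alpha$ plus monotone decay of size $1/\alpha$ when $\alpha>u$, and a single monotone piece of mass $\alpha^t/u^{1+t}\leq 1/u$ when $\alpha\leq u$). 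The Dickman moment computation $\E[\phi(D+u)]\leq e^{-\gamma}t^{-1}(\alpha/(\alpha\vee u))^{t}\leq e^{-\gamma}\zeta(1+t)$ via $\zeta(1+t)\geq 1/t$ is also correct, and since $\zeta(1+t)\geq 1$ the two pieces combine into $\zeta(1+t)\bigl(e^{-\gamma}+2d_K(W,D)/(\alpha\vee u)\bigr)$, giving the explicit constant $C=e^{-\gamma}$. The only thing worth adding is a one-line justification of the integration-by-parts identity itself (Fubini applied to $\tilde h(w)=-\int_{(w,\infty)}d\tilde h$, using $W,D\geq 0$ so that no boundary term at $0$ appears), but this is standard and your flagged ``delicacy'' is exactly the right place to be careful.
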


\subsection{Mertens' formulas}
Next we present some elementary number theory estimations related to partial products and partial sums over the primes for the functions $(1-1/p)$ and $\log(p)/p$, respectively. Such results are attributed to Franz Mertens and the estimations are referred to as ``first, second and third Mertens' formulas''. The first one establishes that 
\begin{align}\label{Firstmertens}
|\sum_{p\in\mathcal{P}_{n}}\frac{\log(p)}{p}-\log(n)|
  &\leq \frac{2}{\log(n)}.
\end{align}
The second Merten's formula reads 
\begin{align}\label{Secondmertens}
\left|\sum_{p\in\mathcal{P}_n}\frac{1}{p}-\log\log(n)-c_1\right|
  &\leq 5/\log(n),
\end{align}
where $c_1$ is a universal constant, while the third guarantees the existence of a constant $C>0$, such that
\begin{align}\label{eq:Mertens}
|\log(m)I_m-e^{-\gamma}|
  &\leq C\frac{1}{\log(m)}.
\end{align}
The reader is referred to \cite{MR3363366}, pages 15-17,  for a proof of these estimations.\\

\subsection{The geometric distribution and the multiplicities of Harmonic samples}
Let $\{\xi_p\}_{p\in\mathcal{P}}$ be a sequence of independent random variables with distribution given as in \eqref{eq:deflawxip}. For $k\in\NN, p\in\mathcal P$, let $\alpha_p(k)$ denote the largest $m$ satisfying $p^m | k$. Therefore $k$ is uniquely determined by $\al_p$'s through the prime factorisation $k=\prod_{p\in\mathcal P} p^{\alpha_p(k)}$. Recall the definitions of $L_n$ and $I_n$, given by \eqref{eq:LnIndef}. The following result, first presented in \cite{ChJaYa}, describes the join distribution of the $\alpha_p(H_n)$'s.

\begin{Proposition}\label{thm:condindeprep}
 
The law of $H_{n}$ can be represented in terms of the $\xi_p$'s as
\begin{align*}
\mathcal{L}((\alpha_{p}(H_{n})\ ;\ p\in\mathcal{P}_n))
  &=\mathcal{L}((\xi_{p}\ ;\ p\in\mathcal{P}\cap[n])\ |\ A_{n}),
\end{align*}
where 
\begin{align}\label{eq:Andef}
A_{n}
  &:=\{\prod_{p\in\mathcal{P}\cap[n]}p^{\xi_{p}}\leq n\}.
\end{align}
Moreover, if $m\geq 21$, then 
\begin{align}\label{eq:Anbound}
\Pb[A_{n}]=L_nI_n\geq 1/2.	
\end{align}

\end{Proposition}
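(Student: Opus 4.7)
The plan is to verify the distributional identity by computing both joint probability mass functions in parallel via unique prime factorisation; this computation will also yield $\Pb[A_n]=L_nI_n$ as a by-product, and the lower bound $L_nI_n\geq 1/2$ will then follow from Mertens' third formula combined with the classical expansion of the harmonic sum.

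First I would observe that every $k\in[n]$ admits a unique factorisation $k=\prod_{p\in\mathcal{P}\cap[n]}p^{\alpha_p(k)}$, since every prime divisor of $k$ is at most $k\leq n$. Hence the prime-exponent map provides a bijection between $[n]$ and the set of tuples $(a_p)_{p\in\mathcal{P}\cap[n]}\in\N_0^{\mathcal{P}\cap[n]}$ satisfying $\prod_p p^{a_p}\leq n$. Setting $k=\prod_p p^{a_p}$, the harmonic law of $H_n$ directly gives
\begin{equation*}
\Pb[(\alpha_p(H_n))_{p\in\mathcal{P}\cap[n]}=(a_p)]=\Pb[H_n=k]=\frac{1}{kL_n},
\end{equation*}
while independence of the $\xi_p$ together with $\Pb[\xi_p=a]=(1-1/p)p^{-a}$ yield, for the same tuple,
\begin{equation*}
\Pb[(\xi_p)_{p\in\mathcal{P}\cap[n]}=(a_p)]=\prod_{p\in\mathcal{P}\cap[n]}(1-1/p)p^{-a_p}=\frac{I_n}{k}.
\end{equation*}
Summing this last identity over all admissible tuples, equivalently over all $k\in[n]$, gives $\Pb[A_n]=I_n\sum_{k=1}^{n}1/k=L_nI_n$, identifying the normalising constant; dividing then produces $\Pb[(\xi_p)=(a_p)\mid A_n]=1/(kL_n)$, matching the joint law of $(\alpha_p(H_n))_{p\in\mathcal{P}\cap[n]}$ and establishing the distributional identity.

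For the lower bound, I would combine Mertens' third formula \eqref{eq:Mertens}, which yields $I_n=e^{-\gamma}/\log n+O(1/\log^2 n)$, with the classical expansion $L_n=\log n+\gamma+O(1/n)$. Multiplying gives $L_nI_n=e^{-\gamma}+O(1/\log n)$, and since $e^{-\gamma}\approx 0.5615>1/2$ the inequality holds for all sufficiently large $n$. The main obstacle, though entirely quantitative, is to certify the explicit threshold $n\geq 21$: one must either track explicit constants through Mertens' estimate and the harmonic expansion so that the asymptotic bound becomes uniformly valid from $21$ onwards, or supplement the asymptotic argument with a direct numerical check of the finitely many small values of $n$ below the range where the asymptotics become sharp.
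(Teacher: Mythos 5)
Your treatment of the distributional identity is correct and complete: the unique-factorisation bijection between $[n]$ and the admissible exponent tuples, the two joint mass functions $1/(kL_n)$ and $I_n/k$, the summation over tuples giving $\Pb[A_n]=I_n\sum_{k\le n}1/k=L_nI_n$, and the division yielding the conditional law are exactly the computation this statement rests on. The paper itself offers no proof (the proposition is quoted from \cite{ChJaYa}), so there is nothing to contrast with, but your argument is the standard one and nothing in it needs repair.

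The gap is in the second claim, $L_nI_n\ge 1/2$ for $n\ge 21$ (the ``$m\ge 21$'' in the statement is evidently a slip for $n$). As you partly concede, the expansion $L_nI_n=e^{-\gamma}+O(1/\log n)$ built on \eqref{eq:Mertens} with an unspecified constant $C$ certifies nothing at any explicit $n$; and your fallback of ``checking the finitely many small values of $n$ below the range where the asymptotics become sharp'' does not stand on its own, because with an ineffective constant that range cannot be identified, so the finite check cannot even be delimited. What closes the argument is an \emph{effective} form of Mertens' third theorem, e.g.\ the Rosser--Schoenfeld inequality $\prod_{p\le x}(1-1/p)>e^{-\gamma}\bigl(1-1/\log^2 x\bigr)/\log x$ (valid from an explicit threshold on), combined with the elementary bound $L_n\ge\log n$: this gives $L_nI_n>e^{-\gamma}\bigl(1-1/\log^{2}n\bigr)$, and this last quantity first exceeds $1/2$ at $n=21$ (numerically $e^{-\gamma}(1-1/\log^{2}21)\approx 0.501$), which is precisely where the threshold in the statement comes from; the finitely many $n$ below the explicit validity range of the Rosser--Schoenfeld bound are then a well-defined direct verification. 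So the architecture of your proof is right, but the quantitative step you leave open is the entire content of the ``$\ge 1/2$'' assertion and must be settled with an explicit estimate rather than an $O(\cdot)$ term.
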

\noindent The next result gives some useful identities for exploiting the distributional properties of the $\xi_p$'s
\begin{lemma}\label{lemma:sizebiasgeom}
Let $\xi$ be a geometric random variable, with distribution 
\begin{align*}
\Pb[\xi=k]
  &=(1-\theta)\theta^{k}, \quad k\ge 0, ~\theta\in (0,1).
\end{align*}
Then, for every compactly supported function $f:\R\rightarrow\R$, 
\begin{align*}
\E[\xi f(\xi)]
  &=\frac{\theta}{1-\theta}\E[f(\xi+\tilde{\xi}+1)].
\end{align*}
where $\tilde{\xi}$ is an independent copy of $\xi.$
\end{lemma}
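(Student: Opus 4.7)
The plan is to establish the identity by direct computation of both sides as series, exploiting the compact support of $f$ to avoid any convergence concerns. On the left-hand side I would write
\begin{align*}
\E[\xi f(\xi)] = (1-\theta)\sum_{k=1}^{\infty} k\, \theta^{k}\, f(k),
\end{align*}
using the explicit form of the geometric mass function and the fact that the $k=0$ term vanishes.

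For the right-hand side, since $\xi$ and $\tilde\xi$ are independent with the same geometric law, I would expand the joint sum
\begin{align*}
\E[f(\xi+\tilde\xi+1)] = (1-\theta)^{2} \sum_{j\ge 0}\sum_{\ell\ge 0} \theta^{j+\ell}\, f(j+\ell+1),
\end{align*}
and then collect terms according to the value $k := j+\ell+1 \ge 1$. The key combinatorial observation is that for each fixed $k\ge 1$, the number of ordered pairs $(j,\ell)$ of non-negative integers with $j+\ell = k-1$ is exactly $k$. This reorganizes the double sum into $(1-\theta)^{2}\sum_{k\ge 1} k\, \theta^{k-1} f(k)$, and multiplying by $\theta/(1-\theta)$ produces $(1-\theta)\sum_{k\ge 1} k\, \theta^{k} f(k)$, which matches the left-hand side exactly.

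There is no substantial obstacle here: the lemma is essentially a two-line bookkeeping identity, and the compact support of $f$ makes every rearrangement legitimate without appeal to dominated convergence. As a sanity check, I would note that the same identity can be read off by differentiating the probability generating function $G_\xi(s) = (1-\theta)/(1-\theta s)$, since $s G_\xi'(s) = \frac{\theta}{1-\theta}\cdot s\, G_\xi(s)^{2}$, which recovers the claim after substitution $s \mapsto$ the appropriate test setting. This alternative route also makes transparent why the size-biased geometric distribution is distributed as $\xi + \tilde\xi + 1$, a fact that will be invoked later when the Stein-method machinery from \cite{MR4003564,BattSchulte} is applied to the weighted sum $Z_m$ in \eqref{eq:Z_mdef}.
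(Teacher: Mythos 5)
Your proof is correct, but it takes a different route from the paper. The paper reduces to the single test function $f(x)=\exp\{\mathbf{i}\lambda x\}$ and obtains the identity by differentiating the characteristic function $\lambda\mapsto(1-\theta)/(1-\theta e^{\mathbf{i}\lambda})$, recognizing the derivative as $\frac{\theta}{1-\theta}$ times the characteristic function of $\xi+\tilde\xi+1$; the general case then follows by Fourier uniqueness. You instead expand both sides as series and use the counting observation that there are exactly $k$ ordered pairs $(j,\ell)$ of non-negative integers with $j+\ell=k-1$, which turns the double sum into $(1-\theta)^2\sum_{k\ge1}k\,\theta^{k-1}f(k)$ and matches the left-hand side after multiplying by $\theta/(1-\theta)$. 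Your computation is verifiably correct, and it is arguably the cleaner argument for the lemma as stated: the paper's opening claim that ``it suffices to consider $f(x)=\exp\{\mathbf{i}\lambda x\}$'' glosses over the fact that complex exponentials are not compactly supported, so one must implicitly invoke that both sides of the identity are determined by their Fourier transforms as finite signed measures on $\N_0$. Your direct rearrangement needs no such appeal (and, as you note, compact support makes every interchange trivial), while the paper's approach has the small advantage of identifying the size-bias structure at the level of transforms, which your generating-function sanity check recovers anyway.
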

\begin{proof}
It suffices to consider the case $f(x)=\exp\{\textbf{i} \lambda x\}$ for some $\lambda\in\R$. For such instance, we have that 
\begin{align*}
\E[\xi f(\xi)]
  &=\frac{1}{\textbf{i}}\frac{d}{d\lambda}\E[\exp\{\textbf{i} \lambda \xi\}]
	=\frac{1}{\textbf{i}}\frac{d}{d\lambda}\frac{1-\theta}{1-\theta e^{\textbf{i}\lambda}}\\
	&= \frac{\theta (1-\theta)e^{\textbf{i}\lambda}}{(1-\theta e^{\textbf{i}\lambda})^2}
	=\frac{\theta}{1-\theta}\E[\exp\{\textbf{i}\lambda (\xi+\tilde{\xi}+1)\}].
\end{align*}
The result follows from here.
\end{proof}
%
 %

\section{Proof of Theorem \ref{thm:mainone}}\label{sec:profofmainthm}
In the sequel,  $\{H_{n}\}_{n\geq 1}$ will denote a sequence of random variables defined over $(\Omega, \mathcal{F},\mathbb{P})$, with $H_{n}$ harmonically distributed over $[n]$, namely, $\Pb[H_{n}=k]:=\frac{1}{L_{n}k}$ for $k\in[n]$ and zero otherwise, where $L_{n}$ is given by \eqref{eq:LnIndef}.\\

\noindent The proof of the bound consists on proving the following steps\\


\noindent \textbf{Step I}\\
As a first step, we will show the following result

\begin{lemma}\label{lem:simplificationtwo}
For every $m\geq 1$, it holds that 
\begin{align*}
\Pb[\psi(H_{n})\leq m]
  &=\frac{1}{L_nI_m}\Pb\left[ S_{m}\leq \frac{\log(n)}{\lambda_m}\right],
\end{align*}
where $S_{m}:=Z_m/\lambda_m$, with  $Z_{m}$ defined by \eqref{eq:Z_mdef} and $\lambda_m$ defined through
$$\lambda_{m}:=\sum_{q\in\mathcal{P}_m}\frac{\log(q)}{q}.$$ 

\end{lemma}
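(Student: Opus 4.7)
The plan is to leverage Proposition \ref{thm:condindeprep} to rewrite the event $\{\psi(H_n)\leq m\}$ in terms of the independent geometric family $\{\xi_p\}_{p\in\mathcal{P}}$, and then exploit the resulting independence to factorize the joint probability.

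First I would observe that, since $\psi(H_n)$ is the largest prime divisor of $H_n$, having all prime factors at most $m$ is equivalent to requiring $\alpha_p(H_n)=0$ for every prime $p\in \mathcal{P}\cap (m,n]$. Applying Proposition \ref{thm:condindeprep} (assume without loss of generality $m\leq n$, the other range being trivial), this yields
\begin{align*}
\Pb[\psi(H_n)\leq m]
  = \Pb\left[\xi_p=0 \text{ for all } p\in \mathcal{P}\cap(m,n] \,\middle|\, A_n\right]
  = \frac{\Pb\left[\xi_p=0,\ \forall p\in \mathcal{P}\cap(m,n];\ A_n\right]}{L_n I_n},
\end{align*}
where in the denominator I used $\Pb[A_n]=L_n I_n$ from \eqref{eq:Anbound}.

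Next I would notice the key simplification: on the event $\{\xi_p=0 \text{ for all } p\in\mathcal{P}\cap(m,n]\}$, the product defining $A_n$ in \eqref{eq:Andef} collapses to $\prod_{p\in\mathcal{P}\cap[m]}p^{\xi_p}$. Taking logarithms, the constraint $A_n$ is then equivalent to $Z_m\leq \log(n)$, where $Z_m$ is given by \eqref{eq:Z_mdef}. Crucially, $Z_m$ depends only on $\{\xi_p : p\leq m\}$, which are independent from $\{\xi_p : p\in(m,n]\}$. Hence the joint probability factorizes:
\begin{align*}
\Pb\left[\xi_p=0,\ \forall p\in \mathcal{P}\cap(m,n];\ Z_m\leq \log(n)\right]
  = \prod_{p\in\mathcal{P}\cap(m,n]}(1-1/p)\cdot \Pb[Z_m\leq \log(n)].
\end{align*}

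To conclude, the product telescopes to $I_n/I_m$ by the definition of $I_\cdot$ in \eqref{eq:LnIndef}, so combining the last two displays gives
\begin{align*}
\Pb[\psi(H_n)\leq m]
  = \frac{(I_n/I_m)\cdot \Pb[Z_m\leq \log(n)]}{L_n I_n}
  = \frac{1}{L_n I_m}\Pb\left[S_m\leq \frac{\log(n)}{\lambda_m}\right],
\end{align*}
after dividing through by $\lambda_m$ inside the probability and using $S_m=Z_m/\lambda_m$. There is no substantial obstacle here; the argument is essentially bookkeeping. The only subtle point, which is the main thing to get right, is to correctly simplify the defining inequality of $A_n$ under the restriction that the large-prime exponents vanish, so that the residual constraint lies in the sigma-algebra generated by $\{\xi_p : p\leq m\}$ and independence can be applied.
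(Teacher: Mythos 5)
Your argument is correct and follows essentially the same route as the paper's proof: apply Proposition \ref{thm:condindeprep} to translate the event into the geometric variables conditioned on $A_n$, observe that on $\{\xi_p=0,\ \forall p\in\mathcal{P}\cap(m,n]\}$ the constraint $A_n$ reduces to $Z_m\leq\log(n)$, and then factorize by independence so that $\prod_{p\in(m,n]}(1-1/p)=I_n/I_m$ cancels against $\Pb[A_n]=L_nI_n$. No gaps; the bookkeeping matches the paper's.
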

\noindent This way, the problem reduces to the analysis of two pieces: the estimation of the term $L_nI_m$, which can be handled by means of Merten's formula, and the analysis of the probability in the right, which now consists on the treatment of a sum of independent random variables. \\

\noindent \textbf{Step II}\\
In the second step we will show 

\begin{lemma}\label{lem:kolmmain}
There exists a universal constant $C>0$, such that
\begin{align*}
|\Pb[S_{m}\leq z]-e^{-\gamma}\mathcal{I}[\rho](z)|
  &\leq \frac{C(1+1/z^2)}{\log(m)},
\end{align*}
for some universal constant $C>0$. 
\end{lemma}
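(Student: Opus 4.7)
The plan is to implement the Stein method for the Dickman distribution with the class of test functions $\mathcal{C}=\{\Indi{[0,z]}:z\in\R_+\}$, exactly as developed in \cite{BattSchulte}. Fix $z>0$ and let $f_z$ denote the solution to the Stein equation \eqref{eq:steinequation} provided by Lemma \ref{lem:rechnicalf1}. Plugging $W=S_m$ into \eqref{eq:steinequation} and taking expectations gives
\begin{align*}
\Pb[S_m\leq z]-e^{-\gamma}\mathcal{I}[\rho](z)
  &=\E[S_m f_z^{\prime}(S_m)]-\E[f_z(S_m+1)-f_z(S_m)].
\end{align*}
Since $f_z$ is absolutely continuous, $\E[f_z(S_m+1)-f_z(S_m)]=\int_0^1\E[f_z^{\prime}(S_m+t)]\,dt$, so the task reduces to estimating the ``Stein discrepancy''
\begin{align*}
\Delta_m(z):=\E[S_m f_z^{\prime}(S_m)]-\int_0^1\E[f_z^{\prime}(S_m+t)]\,dt.
\end{align*}

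Next I would apply the geometric size-bias identity from Lemma \ref{lemma:sizebiasgeom} to each summand of $S_m=\lambda_m^{-1}\sum_{p\in\mathcal{P}_m}\log(p)\xi_p$ with $\theta=1/p$. Writing $S_m=S_m^{(p)}+\lambda_m^{-1}\log(p)\xi_p$ with $S_m^{(p)}$ independent of $\xi_p$, the identity yields
\begin{align*}
\E[S_m f_z^{\prime}(S_m)]
  &=\sum_{p\in\mathcal{P}_m}\frac{\log(p)}{\lambda_m(p-1)}\,\E[f_z^{\prime}(S_m+B_p)],
  \qquad B_p:=\frac{\log(p)}{\lambda_m}(\tilde{\xi}_p+1),
\end{align*}
where $\tilde{\xi}_p$ is an independent copy of $\xi_p$. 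The total weight simplifies via $\frac{1}{p-1}=\frac{1}{p}+\frac{1}{p(p-1)}$ as $\sum_p w_p=1+\lambda_m^{-1}\sum_p\frac{\log(p)}{p(p-1)}$; the remaining sum converges absolutely, so by Mertens' first formula \eqref{Firstmertens} we obtain $\sum_p w_p=1+O(1/\log m)$. This first correction accounts for one $O(1/\log m)$ term in $\Delta_m(z)$ after bounding $\|f_z^{\prime}\|_\infty$ using Lemma \ref{lem:rechnicalf1}.

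The heart of the argument is then to compare the weighted average $\sum_p w_p\E[f_z^{\prime}(S_m+B_p)]$ against $\int_0^1\E[f_z^{\prime}(S_m+t)]\,dt$. The crucial observation is that, when $p$ is drawn with probability $\nu(\{p\})=\log(p)/(p\lambda_m)$, the random point $\log(p)/\lambda_m$ is approximately uniform on $[0,1]$: by Mertens' first formula, for each $t\in[0,1]$,
\begin{align*}
\nu(\{p:\log(p)/\lambda_m\leq t\})
  &=\lambda_m^{-1}\!\!\!\sum_{p\leq m^t}\!\frac{\log(p)}{p}=t+O(1/\log m).
\end{align*}
Exploiting this together with a termwise expansion over the values of $\tilde{\xi}_p$ allows me to rewrite the weighted average as an integral against a measure that differs from Lebesgue measure on $[0,1]$ by $O(1/\log m)$, modulo tail contributions from large $\tilde{\xi}_p$ which decay geometrically in $p$.

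The main obstacle is controlling these pieces against the singular behaviour of $f_z^{\prime}$, and this is where the decomposition $f_z=f_{1,z}+f_{2,z}$ of Lemma \ref{lem:rechnicalf1} comes in. For $f_{2,z}$, the derivative splits as a difference of bounded non-increasing functions, so the comparison between $\E[u_{z,\pm}(S_m+B_p)]$ and $\int_0^1\E[u_{z,\pm}(S_m+t)]\,dt$ reduces, by monotonicity and the Fubini argument, to controlling quantities like $\E[u_{z,\pm}(S_m+B_p)-u_{z,\pm}(S_m+t_p)]$ for the ``typical'' scale $t_p=\log(p)/\lambda_m$, which yields an $O(1/\log m)$ bound. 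For $f_{1,z}$, the explicit derivative $f_{1,z}^{\prime}(x)=-z x^{-2}\Indi{\{x>z\}}$ is singular, and the comparison requires the inverse-moment inequality from Lemma \ref{lem:rechnicalf1} applied with $t=1$, $\alpha=z$, and varying offsets $u\in\{0,B_p,t\}$. This produces the $1/z^2$ factor in the final bound (through the squared singularity of $f_{1,z}^{\prime}$ combined with the shift of order $\log(p)/\lambda_m$). Because Lemma \ref{lem:rechnicalf1} introduces a $d_K(S_m,D)/(\alpha\vee u)$ term, this step closes on itself through the very distance that we are trying to bound; a standard bootstrap absorbs the $d_K$ contribution into the constant $C$, yielding the desired inequality
\begin{align*}
|\Pb[S_m\leq z]-e^{-\gamma}\mathcal{I}[\rho](z)|\leq\frac{C(1+1/z^2)}{\log(m)}.
\end{align*}
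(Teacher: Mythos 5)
Your outline follows the paper's route almost step for step: the Stein equation \eqref{eq:steinequation}, the size-bias identity of Lemma \ref{lemma:sizebiasgeom} applied prime by prime (the paper's split into $T_1$ and $T_2$ is exactly your separation of the $\tilde{\xi}_p=0$ mass from the rest), the Mertens-based comparison of the discrete measure $\nu(\{p\})=\log(p)/(p\lambda_m)$ with the uniform law on $[0,1]$, and the decomposition $f_z=f_{1,z}+f_{2,z}$ with monotonicity for $u_{z,\pm}$. The genuine gap is in how you close the argument. You propose to treat the singular part $f_{1,z}'$ by applying the inverse-moment inequality of Lemma \ref{lem:rechnicalf1} ``with varying offsets $u\in\{0,B_p,t\}$'' and then to dispose of the resulting $d_K(S_m,D)$ terms by ``a standard bootstrap that absorbs the $d_K$ contribution into the constant $C$.'' A self-referential inequality of the form $d_K(S_m,D)\le C(1+z^{-2})/\log m+\kappa\, d_K(S_m,D)$ is only usable if $\kappa<1$ uniformly, and nothing in your sketch establishes this. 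Worse, with the offsets $u=0$ or $u=t\in[0,1]$ the lemma yields terms of size $2\zeta(2)\,d_K(S_m,D)/(z\vee u)$ carrying the full weight $\sum_p\log(p)/(\lambda_m(p-1))\approx 1$, so the coefficient in front of $d_K(S_m,D)$ is of order $\zeta(2)/\max(z,1)$: it is not below one for moderate $z$ and blows up as $z\to0$. There is also the mismatch between the $z$-dependent error $(1+1/z^2)/\log m$ and the $z$-uniform quantity $d_K(S_m,D)$ that any such absorption needs; this must be addressed, not waved away.

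The paper avoids these problems structurally: the $d_K$-carrying estimate of Lemma \ref{lem:rechnicalf1} is invoked only on the $\tilde{\xi}_q\ge 1$ contribution ($T_2$), whose weights $\log(q)q^{-2}(1-1/q)^{-1}/\lambda_m$ are summable, and even there the coefficient of $d_K(S_m,D)$ is computed explicitly (it comes out as $\pi^2/15<1$) before being moved to the left-hand side. The main contribution ($\tilde{\xi}_q=0$) is handled with no recourse to $d_K$ at all, via the quantile coupling $V_m=F_m^{-1}(U)$, localization on the constancy intervals $I_p$, the mean value theorem for $f_{1,z}'(x)=-zx^{-2}$ on $x\ge z$ (this, together with a Mertens sum over a window of width $O(1/(\lambda_m\log p))$, is where the $1/z^2$ and $1/z$ factors actually come from), and summation by parts for $u_{z,\pm}$. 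To repair your plan, either confine the $d_K$-bearing applications of the lemma to the $\tilde{\xi}_p\ge 1$ terms and verify numerically that the resulting coefficient is strictly below one, or simply use $\|f_{1,z}'\|_\infty\le 1/z$ on that piece, which bounds it by $C(1+1/z)/\log m$ with no bootstrap. Finally, your central claim that the weighted average equals ``an integral against a measure that differs from Lebesgue measure by $O(1/\log m)$'' is only the statement that the CDFs are $O(1/\log m)$ apart; to convert that into closeness of integrals of the discontinuous function $t\mapsto\E[f_z'(S_m+t)]$ you need the bounded-variation/monotonicity argument you allude to (or the paper's explicit coupling), so that step is fillable but not yet a proof.
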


\noindent Contingent on the validity of these lemmas, we can prove Theorem \ref{thm:mainone}.

\begin{proof}[Proof of Theorem \ref{thm:mainone}]

Firstly, by Lemma \ref{lem:simplificationtwo}, 
 
\begin{multline*}
\left|\Pb[\psi(H_{n})\leq m]-\frac{1}{\log(m)\Upsilon(n,m)I_{m}}\Pb\left[ S_{m}\leq \frac{\log(n)}{\lambda_m}\right]\right|\\
\begin{aligned}
  &=\left|\frac{1}{L_nI_{m}}\Pb\left[ S_{m}\leq \frac{\log(n)}{\lambda_m}\right]-\frac{1}{\log(n)I_{m}}\Pb\left[ S_{m}\leq \frac{\log(n)}{\lambda_m}\right]\right|\\
  &=\left|1-\frac{\log(n)}{L_n}\right|\frac{1}{\log(n)I_{m}}\Pb\left[ S_{m}\leq \frac{\log(n)}{\lambda_m}\right].
\end{aligned}
\end{multline*}
This relation, together with Merten's lemma and  the approximation 
$$\log(n)\leq L_{n}\leq\log(n)+1,$$ 
leads to the inequality
\begin{align*}
\left|\Pb[\psi(H_{n})\leq m]- \frac{1}{\log(m)\Upsilon(n,m)I_{m}}\Pb\left[ S_{m}\leq \frac{\log(n)}{\lambda_m}\right]\right|
  &\leq \frac{C}{\log(n)\Upsilon(n,m)}.
\end{align*}
By Merten's approximation \eqref{eq:Mertens}, we then conclude that 
\begin{align}\label{eq:ineqmainvod2}
\left|\Pb[\psi(H_{n})\leq m]- \frac{e^{\gamma}}{\Upsilon(n,m)}\Pb\left[ S_{m}\leq \frac{\log(n)}{\lambda_m}\right]\right|
  &\leq  \frac{C}{\Upsilon(n,m)}\left(\frac{1}{\log(m)}+\frac{1}{\log(n)}\right)\leq K/\log(n),
\end{align}
Thus, by Lemma \ref{lem:kolmmain}, 
\begin{multline}\label{eq:ineqmainvod2}
\left|\Pb[\psi(H_{n})\leq m]- \frac{e^{-\gamma}}{\Upsilon(n,m)}\mathcal{I}[\rho]\left(\Upsilon(n,m)/\frac{\lambda_m}{\log(m)}\right)\right|\\
  \leq K\left(\frac{1}{\Upsilon(n,m)\log(m)}+\frac{1}{\log(m)}\right)\leq C/\log(n),
\end{multline}
for some universal constant $C>0$. By the mean value theorem and the boundedness of $\rho$, we deduce the existence of a constant $C>0$ such that 
\begin{align*}
\left|\mathcal{I}[\rho]\left(\Upsilon(n,m)/\frac{\lambda_m}{\log(m)}\right)-\mathcal{I}[\rho]\left(\Upsilon(n,m)\right)\right|
  &\leq C\Upsilon(n,m) \left|1-\frac{\lambda_m}{\log(m)}\right|.
\end{align*}
An application of Merten's approximation \eqref{Firstmertens}, then yields
\begin{align*}
\left|\mathcal{I}[\rho]\left(\Upsilon(n,m)\frac{\lambda_m}{\log(m)}\right)-\mathcal{I}[\rho]\left(\Upsilon(n,m)\right)\right|
  &\leq C\frac{1}{\log(m)}.
\end{align*}
Combining this  with inequality \eqref{eq:ineqmainvod2} then gives 
\begin{align*}
\left|\Pb[\psi(H_{n})\leq m]- \frac{e^{-\gamma}}{\Upsilon(n,m)}\mathcal{I}[\rho]\left(\Upsilon(n,m)\right)\right|
  &\leq C\frac{1}{\Upsilon(n,m)\log(m)}\leq C\frac{1}{\log(n)}.
\end{align*}
Theorem \ref{thm:mainone}  follows from here.
\end{proof}

\section{Proof of Lemma \ref{lem:simplificationtwo}}\label{sec:lem:simplificationtwo}
Observe that by Proposition \ref{thm:condindeprep}
\begin{align*}
\Pb[\psi(H_{n})\leq m]
  &=\Pb[\psi(\prod_{p\in\mathcal{P}_n}p^{\alpha_{p}(H_n)})\leq m]\\
	&=\Pb[\psi(\prod_{p\in\mathcal{P}_n}p^{\xi_{p}})\leq m\ |\ A_{n}]\\
  &=\frac{1}{\Pb[A_{n}]}\Pb[\xi_q=0\ \text{ for all }\ q\in\mathcal{P}\cap(m,n] \ \text{ and }\ \prod_{p\in\mathcal{P}_{n}}p^{\xi_{p}}\leq n],
\end{align*}
Observe that in the event $ \{\xi_q=0\ \text{ for all }\ q\in\mathcal{P}\cap(m,n]\}$, it holds that the product $\prod_{p\in\mathcal{P}_{n}}p^{\xi_{p}}$ coincides with $\prod_{p\in\mathcal{P}_{m}}p^{\xi_{p}}$. Consequently, by the independence of the $\xi_{p}$'s as well as identity \eqref{eq:Anbound}, we have that 
\begin{align*}
\Pb[\psi(H_{n})\leq m]
  &=\frac{1}{L_nI_n}\big(\prod_{q\in\mathcal{P}\cap(m,n]}(1-1/q)\big)\Pb[ \prod_{p\in\mathcal{P}_{m}}p^{\xi_{p}}\leq n]\\
  &=\frac{1}{L_n I_m}\Pb[ \sum_{p\in\mathcal{P}_{m}}\log(p)\xi_{p}\leq \log(n)].
\end{align*}
The result easily follows from here.

\section{Proof of Lemma \ref{lem:kolmmain}}\label{sec:lem:kolmmain}
\noindent Let $D$ a Dickman distributed random variable. Let $U$ be uniform in $(0,1)$ independent of $\{\xi_p\}_{p\in\mathcal{P}}$.
Let $f_{z}$ be the solution to the Stein equation \eqref{eq:steinequation}. For convenience in the notation, we will simply write $f$ instead of $f_{z}$.  Define 
\begin{align*}
S_{m}^{q}
  &:=S_{m}-\frac{\log(q)}{\lambda_{m}}\xi_q, \quad q\in \mathcal P_m,
\end{align*}
with 
$$\lambda_{m}:=\sum_{q\in\mathcal{P}_m}\frac{\log(q)}{q},$$ 
and observe that 
\begin{align*}
\Pb[S_{m}\leq z]-\Pb[D\leq z]
  &=\E[S_{m}f^{\prime}(S_{m})-f^{\prime}(S_{m}+U)]\\
	&=\frac{1}{\lambda_{m}}\sum_{q\in\mathcal{P}\cap[m]}\log(q)\E[\xi_qf^{\prime}(S_{m}^q+\frac{\log(q)}{\lambda_m}\xi_q)]-\E[f^{\prime}(S_{m}+U)].
\end{align*}
By Lemma \ref{lemma:sizebiasgeom}, 
\begin{multline*}
\Pb[S_{m}\leq z]-\Pb[D\leq z]\\
	=\frac{1}{\lambda_{m}}\sum_{q\in\mathcal{P}\cap[m]}\frac{\log(q)}{q}(1-1/q)^{-1}\E[f^{\prime}(S_{m}+\frac{\log(q)}{\lambda_{m}}\tilde{\xi}_q+\frac{\log(q)}{\lambda_{m}})]-\E[f^{\prime}(S_{m}+U)],
\end{multline*}
where $\{\tilde{\xi}_p\}_{p\in\mathcal{P}}$ is an independent copy of $\{\xi_p\}_{p\in\mathcal{P}}$. By conditioning the expectation in the right hand side on the value of $\tilde{\xi}_q$, we obtain 
\begin{align*}
\Pb[S_{m}\leq z]-\Pb[D\leq z]
	&=T_{1}+T_2,
\end{align*}
where 
\begin{align*}
T_1
	&:=\frac{1}{\lambda_{m}}\sum_{q\in\mathcal{P}\cap[m]}\frac{\log(q)}{q}\E[f^{\prime}(S_{m}+\frac{\log(q)}{\lambda_{m}})]-\E[f^{\prime}(S_{m}+U)]\\
T_{2}
	&:=\frac{1}{\lambda_{m}}\sum_{q\in\mathcal{P}\cap[m]}\frac{\log(q)}{q^2}(1-1/q)^{-1}\E[f^{\prime}(S_{m}+\frac{\log(q)}{\lambda_{m}}\tilde{\xi}_q+\frac{\log(q)}{\lambda_{m}})\ |\ \tilde{\xi}_q\geq 1].
\end{align*}
Let $V_{m}$ be a random variable independent of the $\xi_{p}$'s, with distribution given by 
\begin{align}\label{Vmdef}
\Pb[V_{m}=\log(q)/\lambda_m]
  &=\frac{\log(q)}{q\lambda_{m}},
\end{align} 
for $q\in\mathcal{P}\cap[m]$ and zero otherwise. With this notation in hand, we can write
\begin{align*}
T_1
  &=\E[f^{\prime}(S_{m}+V_{m})]-\E[f^{\prime}(S_{m}+U)].
\end{align*}

\noindent By Lemma \ref{lem:rechnicalf1}, $f^{\prime}:=f_1^{\prime}+u_{+}-u_{-}$, where $f_{1}$ is given by \eqref{eq:f1def} and $u_{+},u_{-}$ are non-negative and non-increasing, with $\|u_{+}\|_{\infty},\|u_{-}\|_{\infty}\leq 1$. We can thus write 
\begin{align*}
|T_1|
  &\leq |T_{1,1}|+|T_{1,2}|+ |T_{1,3}|,
\end{align*}
where 
\begin{align}\label{eq:Tisdef}
T_{1,1}
  &:=\E[f_1^{\prime}(S_{m}+V_{m})]-\E[f_1^{\prime}(S_{m}+U)]\nonumber\\
T_{1,2}
  &:=\E[u_{+}(S_{m}+V_{m})]-\E[u_{+}(S_{m}+U)]\nonumber\\
T_{1,3}
  &:=\E[u_{-}(S_{m}+V_{m})]-\E[u_{-}(S_{m}+U)].
\end{align}
Let $F_{m}$ be the distribution function of $V_{m}$ and $F_{m}^{-1}$ its left inverse. Since $F_{m}^{-1}(U)$ is distributed according to $V_{m}$, it follows that $(F_{m}^{-1}(U),U)$ is a coupling for $V_{m}$ and $U$, which allows us to write $F_{m}^{-1}(U)$ instead of $V_{m}$ in \eqref{eq:Tisdef}. Observe that the function $F_{m}^{-1}$ remains constant equal to $\log(p)/\lambda_m$ in the intervals $I_p=[a_p,b_p)$, where
\begin{align}\label{eq:aandbdef}
a_p:=\sum_{q\in\mathcal{P}\cap[p]}\frac{\log(q)}{q\lambda_{m}}\ \ \ \ \ \text{ and }\ \ \ \ \ b_p:=\sum_{p\in\mathcal{P}\cap[p]}\frac{\log(q)}{q\lambda_{m}}+\frac{\log(p_+)}{p_+\lambda_{m}},
\end{align}
for $x_+$, defined as the smallest prime strictly bigger than $x$. Observe that $a_p$ and $b_p$ also depend on $t$ and $m$, but we have avoided this in the notation  for convenience. By the Mertens' approximation \eqref{Firstmertens}, there exists a constant $C>0$ such that 
\begin{align}\label{ineq:mertenforab}
\left|a_{p}-\frac{\log(p)}{\lambda_m}\right|,\left|b_{p}-\frac{\log(p)}{\lambda_m}\right|
  &\leq \frac{C}{\log(p)\log(m)}.
\end{align}
Localizing $V_{m}$ and $U$ over the intervals of constancy of $F^{-1}_m$, we can write the terms $T_{1,i}$ as 
\begin{align*}
T_{1,1}
	&=\sum_{p\in\mathcal{P}\cap[m]}\E[(f_1^{\prime}(S_{m}+\log(p)/\lambda_{m})-f_1^{\prime}(S_{m}+U))\Indi{I_p}(U)]\\
T_{1,2}
	&=\sum_{p\in\mathcal{P}\cap[m]}\E[(u_{+}(S_{m}+\log(p)/\lambda_{m})-u_{+}(S_{m}+U))\Indi{I_p}(U)]\\
T_{1,3}
	&=\sum_{p\in\mathcal{P}\cap[m]}\E[(u_{-}(S_{m}+\log(p)/\lambda_{m})-u_{-}(S_{m}+U))\Indi{I_p}(U)].
\end{align*}
In order to handle the term $T_{1,1}$, we proceed as follows. Let $\psi_{p,m}$ denote the term 
\begin{align*}
\psi_{p,m}
  &:=\E[(f_1^{\prime}(S_{m}+\log(p)/\lambda_{m})-f_1^{\prime}(S_{m}+U))\Indi{I_p}(U)],
\end{align*}
so that $T_{1,1}$ can be written as
\begin{align*}
T_{1,1}
	&=\sum_{p\in\mathcal{P}\cap[m]}\psi_{p,m}.	
\end{align*}
Since $f_{1}^{\prime}$ is supported in $[z,\infty )$ and  $f_{1}(x)=zx^{-1}$ for $x\geq z$, by distinguishing the instances for which $S_{m}+\log(p)/\lambda_{m}$ is bigger than or smaller than $z$ or $U$, we can 
write $\psi_{p,m}=\theta_{p,m}+\vartheta_{p,m}^1+\vartheta_{p,m}^2$, where
\begin{align*}
\theta_{p,m}
  &:=\E[(f_1^{\prime}(S_{m}+\log(p)/\lambda_{m})-f_1^{\prime}(S_{m}+U))\Indi{I_p}(U)\Indi{\{z\leq S_{m}+\log	(p)/\lambda_{m},S_{m}+U\}})]\\
\vartheta_{p,m}^1
  &:=-\E[f_1^{\prime}(S_{m}+U)\Indi{I_p}(U)\Indi{\{S_{m}+\log(p)/\lambda_{m}< z\leq S_{m}+U\}}]\\
\vartheta_{p,m}^2
  &:=\E[f_1^{\prime}(S_{m}+\log(p)/\lambda_{m})\Indi{I_p}(U)\Indi{\{S_{m}+U< z\leq S_{m}+\log	(p)/\lambda_m\}}].
\end{align*}
By \eqref{ineq:mertenforab}, there exists $C>0$ such that  
\begin{align*}
\frac{\log p}{\lambda_m} - \frac{C}{\lambda_m\log(p)}\le  a_p \le b_p \le \frac{\log p}{\lambda_m} + \frac{C}{\lambda_m\log(p)}.
\end{align*}
Consequently, in the event $\{U\in I_p\}$, we have that
\begin{multline*}
\{S_{m}+\log(p)/\lambda_{m}< z\leq S_{m}+U\}\cup\{S_{m}+U< z\leq S_{m}+\log(p)/\lambda_{m}\}\\
  \subset 
	\{z-S_m-C/(\lambda_m\log(p)) \leq \log(p)/\lambda_m\leq z-S_m+C/(\lambda_m\log(p)) \}.
\end{multline*}
Using the fact that $f_{1}^{\prime}(x)$ satisfies $f_{1}^{\prime}(x)=-zx^{-2}$ and $|f_{1}^{\prime}(x)|\leq z^{-1}$ for $x\geq z$, we can easily show that
\begin{align*}
|\vartheta_{p,m}^2|,|\vartheta_{p,m}^1|
  &\leq z^{-1}\mathbb{P}[z-S_m-C/\lambda_m\leq \frac{\log p}{\lambda_m}\leq z-S_m+C/\lambda_m]\mathbb{P}[U\in I_p].
\end{align*}
We will assume without loss of generality that $C\geq $1. From here we deduce that 
\begin{align*}
T_{1,1}
  &\leq \sum_{p\in\mathcal{P}\cap[m]}|\theta_{p,m}|
	+ z^{-1}\E\left[\sum_{p\in\mathcal{P}\cap[e^{-C+(z-S_m)\lambda_{m}},e^{C+(z-S_m)\lambda_{m}}]}\frac{\log(p)}{p\log(m)}\right].
\end{align*} 
Splitting the sum in the right-hand side into the regimes $p\leq C$ and $p>C$, we deduce that 
\begin{align*}
T_{1,1}
  &\leq \sum_{p\in\mathcal{P}\cap[m]}|\theta_{p,m}|
	+ z^{-1}\sup_{y\in\R_{+}}\sum_{p\in\mathcal{P}\cap[C\vee e^{-C+y},C\vee e^{C+y}]}\frac{\log(p)}{p\log(m)}
 +  z^{-1} \sum_{p\in\mathcal{P}\cap[1,C]}\frac{\log(p)}{p\log(m)}.
\end{align*}
Mertens' approximation \eqref{Firstmertens}, yields the existence of a possibly different constant $K>0$, such that 
\begin{align*}
T_{1,1}
  &\leq K(1+z^{-1})/\log(m)+\sum_{p\in\mathcal{P}\cap[m]}|\theta_{p,m}|.
\end{align*}
For handling the term $\theta_{p,m}$, we use the mean value theorem, as well as the fact that $f_{1}^{\prime}(x)=-zx^{-2}$ for $x\geq z$ to write 
\begin{align*}
\theta_{p,m}
  &\leq C\frac{1}{\log(p)\log(m)}\E[z(S_{m}+\frac{\log(p)}{\lambda_{m}})^{-3}\Indi{\{z\leq S_{m}+\log(p)/\lambda_{m}\}}]\Pb[U\in I_p]\\
	&\leq z^{-2}\frac{C}{p_{+}(\log(m))^2}.
\end{align*}
Bounding $p_{+}$ by $p$ from below, adding over $p\in\mathcal{P}$ and using Mertens' second formula \eqref{Secondmertens}, we deduce the existence of a constant $C>0$ such that 
\begin{align*}
T_{1,1}
  &\leq C(1+1/z^2)\frac{1}{\log(m)}.
\end{align*}
For handling the term $T_{1,2}$, we use the non-increasing property of $u_{+}$ to write 

\begin{align*}
|T_{1,2}|
	\leq\sum_{p\in\mathcal{P}\cap[m]}\frac{\log(p)}{p\lambda_{m}}\E[(u_{+}(S_{m}+\log(p)/\lambda_{m})-u_{+}(S_{m}+\log(p)/\lambda_{m}+\log(p_+)/(p_+\lambda_{m})))].
\end{align*}
Applying summation by  parts, $u_+\ge 0$ and $\|u_+\|_\infty \le 1$, we thus get 
\begin{align*}
T_{1,2}
	\leq \frac{\log(p_{\pi(m)})}{p_{\pi(m)}\lambda_{m}}+\frac{1}{\lambda_{m}}\sum_{p\in\mathcal{P}_{m}}(\frac{\log(p)}{p}-\frac{\log(p_{+})}{p_{+}}).
\end{align*}
The second sum is telescopic, with boundary terms bounded by a constant multiple of $\lambda_{m}^{-1}$, which gives the existence of a constant $C>0$ such that 
\begin{align*}
T_{1,2}
  &\leq \frac{C}{\lambda_{m}}.
\end{align*}
The term $T_{1,3}$ can be handled in a similar fashion.\\

\noindent In order to handle the term $T_{2}$ we proceed as follows. Using the decomposition $f^{\prime}=f_{1}^{\prime}+u_{+}-u_{-}$, we obtain the  bound
\begin{align*}
T_{2}
	&\leq \frac{1}{\lambda_{m}}\sum_{q\in\mathcal{P}\cap[m]}\frac{\log(q)}{q^2}(1-1/q)^{-1}\E[f_1^{\prime}(S_{m}+\frac{\log(q)}{\lambda_{m}}\tilde{\xi}_q+\frac{\log(q)}{\lambda_{m}})\ |\ \tilde{\xi}_q\geq 1]
	+\frac{\pi^2}{3\lambda_{m}}.
\end{align*}
The conditional expectation in the right can be estimated by conditioning over $\tilde{\xi}_q$ and applying Lemma \ref{lem:rechnicalf1}. Indeed, using Lemma \ref{lem:rechnicalf1} with $W=S_m$, $u=\log(q)/\lambda_m\tilde{\xi}_q+\log(q)/\lambda_m$,  $\alpha=z$ and $t=1$, we obtain 
\begin{multline*}
\E\left[f_1^{\prime}\left(S_{m}+\frac{\log(q)}{\lambda_{m}}\tilde{\xi}_q+\frac{\log(q)}{\lambda_{m}}\right)\ |\ \tilde{\xi}_q\right]\\
\begin{aligned}
    &=z\E\left[\left(S_{m}+\frac{\log(q)}{\lambda_{m}}\tilde{\xi}_q+\frac{\log(q)}{\lambda_{m}}\right)^{-2}\Indi{\{S_{m}+\frac{\log(q)}{\lambda_{m}}\tilde{\xi}_q+\frac{\log(q)}{\lambda_{m}}> z\}}\ |\ \tilde{\xi}_q\right]\\
    &\leq \zeta(2)(C+2d_{K}(S_m,D)((\tilde{\xi}_q+1)^{-1}\lambda_m/\log q)).
\end{aligned}
\end{multline*}
Observe that 
\begin{align*}
\E[(\tilde{\xi}_q+1)^{-1}]
  &=\sum_{k=1}^{\infty}q(1-1/q)q^{-k}/k)=-q(1-1/q)\log(1-1/q)
\end{align*}

which in turn implies the relation
\begin{align*}
\E[f_1^{\prime}(S_{m}+\frac{\log(q)}{\lambda_{m}}(\tilde{\xi}_q+1) |\ \tilde{\xi}_q\geq 1]
  &\leq C-2\zeta(2)q(1-1/q)\lambda_{m}\log(1-1/q)\frac{ d_K(S_{m},D)}{\log(q)}.
\end{align*}
Therefore, for a possibly different constant $C>0$, the following inequality holds
\begin{align*}
T_2
  &\le C/\log(m) + \frac{\pi^2}{3}d_K(S_m,D)\sum_{q\in\mathcal{P}}\frac{(1-1/q)\log(1-1/q)}{q^2}\\
  &\le C/\log(m) + \frac{\pi^2}{15}d_K(S_m,D)	
\end{align*}

guaranteeing the existence of a constant $\delta\in(0,1)$, such that 

\begin{align*}
T_{2}
	&\leq d_K(S_{m},D)(1-\delta)+\frac{C}{\log(m)}.
\end{align*}
By the above analysis,
\begin{align*}
d_K(S_{m},D)
  &\leq C(1+z^{-1})/\log(m)+ (1-\delta)d_K(S_{m},D).
\end{align*}
Relation  \eqref{Firstmertens} follows from here.\\

\noindent\textbf{Acknowledgements}\\
We thank Louis Chen, Chinmoy Bhattacharjee, 
Ofir Gorodetsky and Larry Goldstein for helpful guidance in the elaboration of this paper. Arturo Jaramillo Gil was supported by the grant CBF2023-2024-2088. 

\bibliographystyle{plain}
\bibliography{bibibi}

\begin{thebibliography}{10}

\bibitem{MR4003564}
Chinmoy Bhattacharjee and Larry Goldstein.
\newblock Dickman approximation in simulation, summations and perpetuities.
\newblock {\em Bernoulli}, 25(4A):2758--2792, 2019.

\bibitem{BattSchulte}
Chinmoy Bhattacharjee and Matthias Schulte.
\newblock Dickman approximation of weighted random sums in the kolmogorov distance.
\newblock {\em Arxiv}, 2022.

\bibitem{MR0428387}
Louis H.~Y. Chen.
\newblock Poisson approximation for dependent trials.
\newblock {\em Ann. Probability}, 3(3):534--545, 1975.

\bibitem{ChJaYa}
Louis H.~Y. Chen, Arturo Jaramillo, and Xiaochuan Yang.
\newblock A probabilistic approach to the erd\"os-kac theorem for additive functions.
\newblock {\em Arxiv}, 2021.

\bibitem{MR0046375}
N.~G. de~Bruijn.
\newblock On the number of positive integers {$\leq x$} and free of prime factors {$>y$}.
\newblock {\em Nederl. Akad. Wetensch. Proc. Ser. A}, 54:50--60, 1951.

\bibitem{MR174530}
N.~G. de~Bruijn and J.~H. van Lint.
\newblock Incomplete sums of multiplicative functions. {I}. {II}.
\newblock {\em Indag. Math.}, 26:339--347, 348--359, 1964.
\newblock Nederl. Akad. Wetensch. Proc. Ser. A {\bf 67}.

\bibitem{Dickman}
Karl Dickman.
\newblock On the frequency of numbers containing prime factors of a certain relative magnitude.
\newblock {\em Arkiv f\"or Matematik, Astronomi och Fysik.}, 22A(10):1–14, 1971.

\bibitem{MR3418541}
Christian D\"{o}bler.
\newblock Stein's method of exchangeable pairs for the beta distribution and generalizations.
\newblock {\em Electron. J. Probab.}, 20:no. 109, 34, 2015.

\bibitem{MR3788176}
Christian D\"{o}bler and Giovanni Peccati.
\newblock The gamma {S}tein equation and noncentral de {J}ong theorems.
\newblock {\em Bernoulli}, 24(4B):3384--3421, 2018.

\bibitem{MR2467549}
Andrew Granville.
\newblock Smooth numbers: computational number theory and beyond.
\newblock In {\em Algorithmic number theory: lattices, number fields, curves and cryptography}, volume~44 of {\em Math. Sci. Res. Inst. Publ.}, pages 267--323. Cambridge Univ. Press, Cambridge, 2008.

\bibitem{MR0814007}
Douglas Hensley.
\newblock The number of positive integers {$\leq x$} and free of prime factors {$>y$}.
\newblock {\em J. Number Theory}, 21(3):286--298, 1985.

\bibitem{MR0804201}
Adolf Hildebrand.
\newblock Integers free of large prime factors and the {R}iemann hypothesis.
\newblock {\em Mathematika}, 31(2):258--271, 1984.

\bibitem{MR0831874}
Adolf Hildebrand.
\newblock On the number of positive integers {$\leq x$} and free of prime factors {$>y$}.
\newblock {\em J. Number Theory}, 22(3):289--307, 1986.

\bibitem{MR1265913}
Adolf Hildebrand and G\'{e}rald Tenenbaum.
\newblock Integers without large prime factors.
\newblock {\em J. Th\'{e}or. Nombres Bordeaux}, 5(2):411--484, 1993.

\bibitem{MR2378655}
Hugh~L. Montgomery and Robert~C. Vaughan.
\newblock {\em Multiplicative number theory. {I}. {C}lassical theory}, volume~97 of {\em Cambridge Studies in Advanced Mathematics}.
\newblock Cambridge University Press, Cambridge, 2007.

\bibitem{MR2789507}
Erol~A. Pek\"{o}z and Adrian R\"{o}llin.
\newblock New rates for exponential approximation and the theorems of {R}\'{e}nyi and {Y}aglom.
\newblock {\em Ann. Probab.}, 39(2):587--608, 2011.

\bibitem{MR3968515}
Ross~G. Pinsky.
\newblock A natural probabilistic model on the integers and its relation to {D}ickman-type distributions and {B}uchstab's function.
\newblock In {\em Probability and analysis in interacting physical systems}, volume 283 of {\em Springer Proc. Math. Stat.}, pages 267--294. Springer, Cham, 2019.

\bibitem{MR0031958}
V.~Ramaswami.
\newblock On the number of positive integers less than {$x$} and free of prime divisors greater than {$x^c$}.
\newblock {\em Bull. Amer. Math. Soc.}, 55:1122--1127, 1949.

\bibitem{MR1889623}
Joung~Min Song.
\newblock Sums of nonnegative multiplicative functions over integers without large prime factors. {II}.
\newblock {\em Acta Arith.}, 102(2):105--129, 2002.

\bibitem{MR0402873}
Charles Stein.
\newblock A bound for the error in the normal approximation to the distribution of a sum of dependent random variables.
\newblock In {\em Proceedings of the {S}ixth {B}erkeley {S}ymposium on {M}athematical {S}tatistics and {P}robability ({U}niv. {C}alifornia, {B}erkeley, {C}alif., 1970/1971), {V}ol. {II}: {P}robability theory}, pages 583--602. Univ. California Press, Berkeley, CA, 1972.

\bibitem{MR1829871}
G\'erald Tenenbaum.
\newblock Note on a paper by {J}. {M}. {S}ong: ``{S}ums of nonnegative multiplicative functions over integers without large prime factors. {I}'' [{A}cta {A}rith. {\bf 97} (2001), no. 4, 329--351; {MR}1823551 (2002f:11130)].
\newblock {\em Acta Arith.}, 97(4):353--360, 2001.

\bibitem{MR3363366}
G\'{e}rald Tenenbaum.
\newblock {\em Introduction to analytic and probabilistic number theory}, volume 163 of {\em Graduate Studies in Mathematics}.
\newblock American Mathematical Society, Providence, RI, third edition, 2015.

\bibitem{MR3448979}
Tim Trudgian.
\newblock Updating the error term in the prime number theorem.
\newblock {\em Ramanujan J.}, 39(2):225--234, 2016.

\end{thebibliography}

\end{document}